\newtheorem{Thm}{Theorem}[section]
\newtheorem{Prop}[Thm]{Proposition}
\newtheorem{Cor}[Thm]{Corollary}
\newtheorem{Lem}[Thm]{Lemma}
\newtheorem{Def}[Thm]{Definition}
\theoremstyle{definition}
\newtheorem{Ex}[Thm]{Example}
\newtheorem{Rem}[Thm]{Remark}
\newcommand{\C}{\mathbb{C}}
\newcommand{\D}{\mathrm{D}}
\newcommand{\Z}{\mathbb{Z}}
\newcommand{\N}{\mathbb{N}}
\newcommand{\PP}{\mathbb{P}}
\newcommand{\Gr}{\mathrm{Gr}}
\newcommand{\gr}{\mathrm{gr}}
\newcommand{\tor}{\mathrm{tor}}
\newcommand{\Tor}{\mathrm{Tor}}
\newcommand{\Tails}{\mathrm{Tails}}
\newcommand{\tails}{\mathrm{tails}}
\newcommand{\pdim}{\mathrm{pdim}}
\newcommand{\gdim}{\mathrm{gl.dim}}
\newcommand{\Proj}{\mathop{\mathrm{Proj}}\nolimits}
\newcommand{\proj}{\mathop{\mathrm{proj}}\nolimits}
\newcommand{\Ext}{\mathop{\mathrm{Ext}}\nolimits}
\newcommand{\End}{\mathop{\mathrm{End}}\nolimits}
\newcommand{\Hom}{\mathop{\mathrm{Hom}}\nolimits}
\begin{document}
\title{Non-commutative projective Calabi--Yau schemes}
\author{Atsushi Kanazawa} 
\subjclass[2010]{14A22, 16S38} 
\keywords{Non-commutative projective schemes, Calabi--Yau}
\maketitle
\begin{abstract}
The objective of the present article is to construct the first examples of (non-trivial) non-commutative projective Calabi--Yau schemes in the sense of Artin and Zhang \cite{AZ1}.   
\end{abstract}

\section{Introduction}
The present article is concerned with certain non-commutative Calabi--Yau projective schemes. 
Recently non-commutative Calabi--Yau algebras have attracted considerable attention \cite{Gin, Boc, VdB2, Sze} due to their fruitful connections to superstring theory. 
However, almost all known non-commutative Calabi--Yau algebras are quiver algebras and thus non-commutative analogues of {\it local} Calabi--Yau manifolds. 
The objective of this article is to construct the first examples of (non-trivial) non-commutative {\it projective} Calabi--Yau schemes in the sense of Artin and Zhang \cite{AZ1}. 
The main theorem of the article is the following: 

\begin{Thm}[Theorem \ref{CY Main}] \label{Thm Intro}
Let $k$ be an algebraically closed field of characteristic zero and consider the following graded $k$-algebra 
$$
	A_n:=k\langle x_{1},\dots,x_{n}\rangle /\big(\sum_{k=1}^{n}x_{k}^{n},\  x_{i}x_{j}=q_{ij}x_{j}x_{i}\big)_{i,j}, 
$$  
where the quantum parameters $q_{ij} \in k^{\times}$ satisfy $q_{ii}=q_{ij}^n=q_{ij}q_{ji}=1$. 
Then the quotient category $\mathrm{Coh}(A_n):=\mathrm{gr}(A_n)/\mathrm{tor(A_n)}$ is a Calabi--Yau $(n-2)$ category if and only if $\prod_{i=1}^{n}q_{ij}$ is independent of $1\le j\le n$. 
\end{Thm}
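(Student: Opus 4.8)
The plan is to exhibit $A_n$ as a noncommutative hypersurface inside the quantum affine space and to read off the Calabi--Yau property from the balanced dualizing complex via Serre duality for noncommutative projective schemes (Artin--Zhang, Yekutieli--Zhang). Write $S=k\langle x_1,\dots,x_n\rangle/(x_ix_j-q_{ij}x_jx_i)_{i,j}$ for the quantum polynomial ring. First I would check that $f:=\sum_{k=1}^n x_k^n$ is \emph{central} in $S$: from $x_ix_k=q_{ik}x_kx_i$ and $q_{ik}^n=1$ one gets $x_i x_k^n=q_{ik}^n x_k^n x_i=x_k^n x_i$, so every $x_k^n$, and hence $f$, is central. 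Since $S$ is a noetherian domain and $f\neq 0$, the element $f$ is a regular central element of degree $n$ and $A_n=S/(f)$.

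Next I would invoke the standard structure theory of $S$: it is noetherian, Koszul, and Artin--Schelter regular of global dimension $n$, with Gorenstein parameter $n$ and diagonal Nakayama automorphism $\nu_S(x_j)=(\prod_{i=1}^n q_{ij})x_j$ (the precise normalization, and whether one uses this product or its inverse, will be immaterial below since $q_{ij}q_{ji}=1$ makes the two differ by inversion, and a family of scalars is constant iff its inverses are). Feeding the exact sequence $0\to S(-n)\xrightarrow{f}S\to A_n\to 0$ into the hypersurface formalism for dualizing complexes, I would conclude that $A_n$ is noetherian AS--Gorenstein of injective dimension $n-1$, that its Gorenstein parameter vanishes — the degree $n$ of $f$ exactly cancelling the Gorenstein parameter $n$ of $S$ — and, because $f$ is central and so contributes no extra twist, that its Nakayama automorphism is the induced diagonal map $\nu_A(x_j)=(\prod_{i=1}^n q_{ij})x_j$. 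Being noetherian and AS--Gorenstein, $A_n$ satisfies the $\chi$--condition and carries a balanced dualizing complex, so $\mathrm{Coh}(A_n)$ has cohomological dimension $n-2$ and admits Serre duality.

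By the Yekutieli--Zhang description, the Serre functor of $\mathrm{Coh}(A_n)$ then has the form $\mathbb{S}=(-)^{\nu_A}(\ell)[\,n-2\,]$, the Nakayama twist composed with the shift by the Gorenstein parameter $\ell=0$ and the homological shift $[n-2]$. Hence $\mathrm{Coh}(A_n)$ is Calabi--Yau of dimension $n-2$ if and only if the twist functor $(-)^{\nu_A}$ is naturally isomorphic to the identity on $\mathrm{Coh}(A_n)$. The clean half of the equivalence is this: if $\prod_{i=1}^n q_{ij}=\lambda$ is independent of $j$, then $\nu_A$ acts on $(A_n)_m$ as $\lambda^m$, and the scalars $\lambda^m\cdot\mathrm{id}$ assemble into a natural isomorphism $M\xrightarrow{\sim}M^{\nu_A}$ of graded modules, which descends to a natural isomorphism $(-)^{\nu_A}\cong\mathrm{id}$ on $\mathrm{Coh}(A_n)$; this yields the ``if'' direction.

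The main obstacle is the converse. Passing from $\mathrm{gr}(A_n)$ to the quotient $\mathrm{Coh}(A_n)$ annihilates torsion and could in principle trivialize more automorphisms than the scalar ones, so I must show that when $\prod_{i=1}^n q_{ij}$ genuinely depends on $j$ the twist $(-)^{\nu_A}$ is \emph{not} isomorphic to the identity, whence $A_n$ fails to be Calabi--Yau. I would argue that, since $A_n$ is generated in degree one with $(A_n)_0=k$, any natural isomorphism $\eta\colon\mathrm{id}\Rightarrow(-)^{\nu_A}$ on $\mathrm{Coh}(A_n)$ is represented by graded isomorphisms compatible with all degree--one multiplications; evaluating on the twisted structure sheaves $\pi A_n(i)$ and on point modules forces the defining scalars $c_j=\prod_{i=1}^n q_{ij}$ to coincide. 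Equivalently, one pins down exactly which graded automorphisms become inner in $\mathrm{Coh}(A_n)$ — precisely the scalar ones $x_j\mapsto\lambda x_j$ — and concludes that the diagonal $\nu_A$ is trivial in $\mathrm{Coh}(A_n)$ if and only if $\prod_{i=1}^n q_{ij}$ is independent of $j$. Making this last step rigorous, i.e. ruling out spurious natural isomorphisms in the quotient category, is the crux of the argument.
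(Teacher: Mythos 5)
Your overall route is the same as the paper's---realize $A_n$ as the hypersurface $S/(f)$ in the quantum polynomial ring, compute the balanced dualizing complex, and reduce the Calabi--Yau condition to triviality of the induced Nakayama twist on the quotient category---but there is a genuine gap at the step where you claim that, being noetherian and AS--Gorenstein, $A_n$ automatically gives $\mathrm{Coh}(A_n)$ the required dimension and duality. The Calabi--Yau property as defined here demands $\gdim(\tails(A_n))=n-2$, i.e.\ \emph{finite global dimension} of the quotient category; that is a smoothness condition, not a Gorenstein condition, and it is exactly the point where the Fermat shape of $f$ must be used. Indeed, $S/(x_1^n)$ is equally a quotient of $S$ by a central element of degree $n$, hence AS--Gorenstein with the same vanishing Gorenstein parameter, yet its category of tails (the non-reduced hypersurface $x_1^n=0$) has infinite global dimension and is not Calabi--Yau. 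Your argument never distinguishes $\sum_k x_k^n$ from such degenerate choices. The paper devotes all of Section \ref{gdim} to this: it regards $\proj(A_n)$ as modules over a sheaf of algebras on the commutative scheme attached to $D=k\langle t_1,\dots,t_n\rangle/(\sum_k t_k)$ with $t_i=x_i^n$, localizes to affine patches $C$ over $R\cong k[T_1,\dots,T_{n-1}]/(\sum_k T_k+1)$, and proves $\gdim(C_{\mathfrak{m}})=n-2$ at every maximal ideal $\mathfrak{m}$---via semisimplicity of a twisted group algebra when all coordinates are nonzero, and by induction on the normal elements $X_{n-1},X_{n-2},\dots$ (Lemmas \ref{SmLem4}, \ref{SmLem5}) when some coordinate vanishes. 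Without this input, the Serre-functor formula $\mathbb{S}=(-)^{\nu_A}[n-2]$ on the derived category is unsupported, and neither direction of the equivalence goes through, since $\gdim(\tails(A_n))=n-2$ is itself part of the conclusion.

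The second gap is the one you flagged yourself: showing that a non-scalar diagonal twist is \emph{not} isomorphic to the identity on $\mathrm{Coh}(A_n)$. Here you are close to the paper, whose treatment is also brief: its Remark \ref{Remark 1} states only that automorphisms acting by $c^m$ on the degree-$m$ piece become invisible in $\tails$, and the converse is asserted rather than argued; your suggestion to detect the twist on point modules is a reasonable way to make that converse rigorous. But since your write-up leaves this step open \emph{and} derives the global dimension statement from a false implication (Gorenstein $\Rightarrow$ smooth), the proposal as it stands does not prove the theorem; the missing smoothness argument of Section \ref{gdim} is the essential piece to supply.
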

Moreover, we show that there exist quantum parameters $q_{i,j}$'s such that the graded $k$-algebra $A_n$ is not realized 
as a twisted coordinate ring of a Calabi--Yau $(n-2)$-fold. \\

One motivation of our study comes from a virtual counting theory of the stable sheaves on a polarized complex Calabi--Yau threefold \cite{Tho}.    
In \cite{Sze}, Szendr\H{o}i introduced a non-commutative version of the theory for the quiver Calabi--Yau 3 algebras \cite{Boc}. 
However, it relies on the existence of the global Chern--Simons function on the moduli space of stables modules and cannot be readily generalized to the projective case. 
In \cite{Kan}, the author developed a virtual counting theory of the stable modules over a non-commutative projective Calabi--Yau scheme based on the work \cite{BCHR}. 
The above $k$-algebra $A_n$ serves as an important example of the general theory \cite{Kan}.  


\section{Non-Commutative Calabi--Yau Projective Schemes} \label{NC projective schemes}
We begin with a review of the notion of non-commutative projective geometry introduced by Artin and Zhang \cite{AZ1}. 
Throughout this article, {\it non-commutative} means not necessarily commutative. 

\subsection{Non-commutative Projective Schemes}

Let $k$ be a field and $A=\bigoplus_{i=0}^{\infty}A_{i}$ be a connected noetherian graded $k$-algebra. 
We assume that each graded piece is finite dimensional and $A_0\cong k$. 
We denote by $\Gr(A)$ the category of graded right $A$-modules with morphisms the $A$-module homomorphisms of degree zero 
and by $\gr(A)$ the subcategory consisting of finitely generated right $A$-modules.  
The augmentation ideal of $A$ is defined by $\mathfrak{m}:=\bigoplus_{i=1}^{\infty}A_{i}$. \\

Let $M=\bigoplus_{i=1}^{\infty} M_{i}$ be a graded right $A$-module. 
Let $\Tor(A)$ denote the subcategory of $\Gr(A)$ of torsion modules and $\tor(A)$ denote the intersection of $\Tor(A)$ and $\gr(A)$. 
For an integer $n \in \Z$ and graded $A$-module $M$ we define $M(n)$ as the graded $A$-module that is equal to $M$ as an $A$-module, but with grading $M(n)_{i}:=M_{n+i}$. 
We refer to the functor $s:\Gr(A)\rightarrow \Gr(A), \ M \mapsto M(1)$ as the shift functor and $s^{n}$ as the $n$-th shift functor.\\ 

In \cite{AZ1}, Artin and Zhang introduced the notion of a non-commutative projective scheme as follows. 
We define $\Tails(A)$ to be the quotient abelian category $\Tails(A):=\Gr(A)/\Tor(A)$. 
The canonical exact functor from $\Gr(A)$ to $\Tails(A)$ is denoted by $\pi$. 
We define $\tails(A):=\gr(A)/\tor(A)$ in a similar manner. 
If $M\in \Gr(A)$, we use the corresponding script letter $\mathscr{M}$ for $\pi(M)$. 
For example $\mathscr{A}:=\pi(A_{A})$ where $A_A$ is $A$ viewed as a right $A$-module. 
The non-commutative projective scheme of a graded right noetherian $k$-algebra $A$ is defined as the triple 
$$
\proj(A):=(\tails(A), \mathscr{A}, s). 
$$

Let $X=\proj(A)$. 
Since $\Tails(A)$ is an abelian category with enough injectives, 
we may define the functors $\Ext_{\Tails(A)}^{i}(\mathscr{M},*)$ as the $i$-th right derived functor of $\Hom_{\Tails(A)}(\mathscr{M},*)$.  
In particular the global section functor 
$$
H^{0}(X,*):=\Hom_{\Tails(A)}(\mathscr{A},*):\Tails(A)\longrightarrow \mathrm{Vect}_{k}
$$
induces the higher cohomologies $H^{i}(X,\mathscr{M}):=\Ext_{\Tails(A)}^{i}(\mathscr{A},\mathscr{M})$. 
The bifunctor $\Ext_{\tails(A)}^i(*,**)$ is defined as restriction of $\Ext_{\Tails(A)}^{i}(*,**)$ on $\tails(A)$. 
We say that a noetherian graded $k$-algebra $A$ satisfies condition $\chi$ if $\dim_{k} \Ext_{\Tails(A)}^{i}(k,M)<\infty$ for all $i\ge 0$.


\subsection{Calabi--Yau Condition}
Let $k$ be an algebraic closed field of characteristic zero.  
We denote by $A_n$ the non-commutative graded $k$-algebra 
$$
A_n:=k\langle x_{1},\dots,x_{n}\rangle /(\sum_{k=1}^{n}x_{k}^{n},\  x_{i}x_{j}=q_{ij}x_{j}x_{i})_{i,j}, 
$$  
where the quantum parameters $q_{ij}$'s are $n$-th roots of unity with $q_{ii}=q_{ij}q_{ji}=1$.  
The graded $k$-algebra $A_n$ is of the form $A_n=B_n/(f_n)$ where 
$$
B_n:=k\langle x_{1},\dots,x_{n}\rangle/(x_{i}x_{j}=q_{ij}x_{j}x_{i})_{i,j}, \ \ \ f_n:=\sum_{k=1}^{n}x_{k}^{n}. 
$$
The $k$-algebra $B_n$ is a Koszul Artin--Shelter (AS) regular algebra.  
We observe that $f_n$ is a normalizing element of degree equal to the global dimension of $B_n$.  
Thus informally $\proj(A_n)$ is the non-commutative Fermat hypersurface in quantum $\mathbb{P}^{n-1}$. 
This example was previously studied in physics \cite{BS,BL} without much mathematical justification. 

\begin{Thm} \label{CY Main}
Let $A_n$ be the $k$-algebra defined above. 
Then $\proj(A_n)$ is a Calabi--Yau $(n-2)$ projective scheme if and only if $\prod_{i=1}^{n}q_{ij}$ is independent of $1\le j\le n$. 
\end{Thm}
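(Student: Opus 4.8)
The plan is to identify the Calabi--Yau property of $\proj(A_n)$ with the triviality (up to cohomological shift) of the Serre functor of $\tails(A_n)$, and then to reduce the latter to an explicit computation of the Nakayama automorphism of $A_n$. First I would record the homological data of the ambient algebra: $B_n$ is a noetherian Koszul AS-regular algebra of global dimension $n$ (a quantum polynomial ring), its Gorenstein parameter is $n$ (the sum of the degrees of its generators), and its Nakayama automorphism is the diagonal map $\mu_{B_n}(x_j)=p_j x_j$ with $p_j:=\prod_{i=1}^{n}q_{ij}$. Next I would check that $f_n$ is a \emph{central} regular element: the hypothesis $q_{ij}^{n}=1$ forces each $x_k^{n}$ to be central in $B_n$, so $f_n$ is central, and $B_n$ being a domain makes $f_n$ regular. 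Since $\mu_{B_n}(f_n)=\sum_k p_k^{n}x_k^{n}=f_n$ (using $p_k^{n}=\prod_i q_{ik}^{n}=1$), the automorphism $\mu_{B_n}$ descends to $A_n$.

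Because $A_n=B_n/(f_n)$ is the quotient of an AS-regular algebra by a regular central element of degree $n$, it is noetherian and AS-Gorenstein of injective dimension $n-1$, so $\proj(A_n)=\tails(A_n)$ has dimension $n-2$. The crucial numerical point is that the Gorenstein parameter of $A_n$ is $n-\deg f_n = n-n = 0$; this is the non-commutative shadow of the fact that a degree-$n$ hypersurface in $\PP^{n-1}$ has trivial canonical class, and it is precisely what makes the degree-shift part of the Serre functor disappear. Since $f_n$ is central, the automorphism it associates to the quotient is the identity, and the standard formula for the Nakayama automorphism of a quotient by a regular normalizing element gives $\mu_{A_n}(x_j)=p_j x_j$.

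As $A_n$ is noetherian AS-Gorenstein it possesses a balanced dualizing complex and satisfies condition $\chi$, so by the Serre duality of Yekutieli--Zhang the category $\tails(A_n)$ admits a Serre functor, which the computation above identifies with $\mathbb{S}=F_{\mu_{A_n}}\circ s^{0}\circ[n-2]=F_{\mu_{A_n}}[n-2]$, where $F_{\mu_{A_n}}$ is the autoequivalence of $\tails(A_n)$ induced by the Nakayama automorphism. Hence $\proj(A_n)$ is Calabi--Yau of dimension $n-2$ if and only if $F_{\mu_{A_n}}\cong\mathrm{id}$. The heart of the theorem is then the claim that $F_{\mu_{A_n}}\cong\mathrm{id}$ on $\tails(A_n)$ if and only if $\mu_{A_n}$ is a scalar automorphism, i.e. all $p_j$ are equal, which is exactly the condition that $\prod_{i=1}^{n}q_{ij}$ be independent of $j$.

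For the ``if'' direction, if $p_j\equiv c$ then $\mu_{A_n}$ acts on the degree-$d$ piece by $c^{d}$, and the degreewise rescaling $m\mapsto c^{\deg m}m$ is a natural $A_n$-module isomorphism $M\xrightarrow{\sim}F_{\mu_{A_n}}(M)$, so $F_{\mu_{A_n}}\cong\mathrm{id}$ already on $\Gr(A_n)$. The main obstacle is the converse, that a non-scalar diagonal $\mu_{A_n}$ cannot induce the identity on $\tails(A_n)$; note one cannot simply work on $\Gr(A_n)$, since the discrepancy could be confined to torsion. Here I would use the point modules of $\proj(A_n)$: a non-scalar diagonal automorphism fails to fix some closed point $p$ of the quantum hypersurface, and $F_{\mu_{A_n}}$ carries the associated point object $\mathscr{P}$ to the point object attached to $\mu_{A_n}(p)\neq p$. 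Since distinct points yield non-isomorphic simple objects in $\tails(A_n)$, a natural isomorphism $F_{\mu_{A_n}}\cong\mathrm{id}$ would force these to coincide, a contradiction. Combined with uniqueness of the Serre functor, this proves both directions. The most delicate inputs are the bookkeeping of $\mu_{A_n}$ under the quotient and the verification that $\tails(A_n)$ genuinely contains a point object moved by a non-scalar $\mu_{A_n}$.
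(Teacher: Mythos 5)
Your central computation---the Nakayama automorphism of $B_n$ is $x_j\mapsto p_jx_j$ with $p_j=\prod_i q_{ij}$, it descends to $A_n$ because $f_n$ is central and $\mu$-invariant, the Gorenstein parameter of $A_n$ is $n-\deg f_n=0$, and hence the Serre functor of $\tails(A_n)$ is the $\mu_{A_n}$-twist composed with $[n-2]$---is the same argument as the paper's, merely phrased via Nakayama automorphisms instead of the balanced dualizing complex $R\Gamma_{\mathfrak{m}}(\cdot)'$ computed through the Frobenius structure of the Koszul dual (Proposition \ref{R_B}); your ``if'' direction is exactly Remark \ref{Remark 1}. The genuine gap is the sentence claiming that, because $A_n$ is AS-Gorenstein of injective dimension $n-1$, the category $\tails(A_n)$ ``has dimension $n-2$'' and (with condition $\chi$) admits a Serre functor. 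The Calabi--Yau definition in force here requires $\gdim(\tails(A_n))=n-2$, i.e.\ \emph{finite global dimension}, which is a smoothness statement and does not follow from Gorensteinness: \emph{any} quotient of $B_n$ by a central regular element of degree $n$ is AS-Gorenstein with Gorenstein parameter $0$---including visibly singular ones such as $B_n/(x_1^n)$ or $B_n/(x_1^n+\dots+x_{n-1}^n)$---and for those $\tails$ has infinite global dimension, carries no Serre functor on its bounded derived category, and is not Calabi--Yau. Yekutieli--Zhang duality for a Gorenstein algebra satisfying $\chi$ gives cohomological dimension $n-2$ and a dualizing object, but the functorial perfect pairing $\Ext^{i}(\mathscr{M},\mathscr{N})\otimes\Ext^{n-2-i}(\mathscr{N},\mathscr{M})\to k$ for \emph{all} pairs of objects needs finite global dimension. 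This is precisely what the paper proves in Section \ref{gdim}, by passing to the sheaf of algebras over $\proj(D)$ with $D$ generated by the central elements $t_i=x_i^n$, localizing at maximal ideals, observing that the fiber over a point with all coordinates nonzero is a twisted group algebra (semisimple since $\mathrm{char}\,k=0$), and inducting on the number of vanishing coordinates via Lemmas \ref{SmLem4} and \ref{SmLem5}. That argument uses the Fermat form and the roots-of-unity hypothesis essentially; your proposal never supplies a substitute for it.

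On the converse (``only if'') direction your route genuinely differs from the paper's: the paper simply cites Remark \ref{Remark 1} for both implications, whereas you propose to exhibit a point object moved by a non-scalar twist. This is a workable and in fact more complete plan, but as written it is a sketch: you must produce a point module of $A_n$ (not just of $B_n$) supported where two coordinates $i,j$ with $p_i\neq p_j$ are both nonzero---e.g.\ on the line $\{x_k=0:k\neq i,j\}$ the equation $x_i^n+x_j^n=0$ has such solutions, and these do give $A_n$-point modules---then check that the twist functor sends this object to the point object at the translated point and that distinct point modules remain non-isomorphic in $\tails(A_n)$. Filling this in would strengthen the paper's own treatment; but the decisive defect of the proposal remains the missing proof that $\gdim(\tails(A_n))=n-2$.
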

Here we say that $\proj(A)$ is a Calabi--Yau $m$ projective scheme if $\gdim(\tails(A))=m$ and $\tails(A)$ has a functorial perfect paring
$$
\Ext^{i}(\mathscr{M}, \mathscr{N})\otimes_{k} \Ext^{m-i}(\mathscr{N}, \mathscr{M}) \longrightarrow k
$$
for all $\mathscr{M},\mathscr{N} \in \tails(A)$. 
By passing $\tails(A_n)$ to its derived category, we get a Calabi--Yau triangulated $(n-2)$ category in the sense of \cite{Ker}.

\begin{Ex} \label{Ex}
Let $X=\Proj(C) \subset \mathbb{P}^4$ be the Fermat quintic threefold given by 
$$
C:=k [x_{1},x_{2},x_{3},x_{4},x_{5}]/ (\sum_{i=1}^{5}x_{i}^{5}). 
$$ 
Let $q_{i}$ be a $5$-th root of unity for $1 \le i \le 5$. 
Then the map 
$$
[x_{1}:x_{2}:x_{3}:x_{4}:x_{5}] \mapsto [q_{1}x_{1}:q_{2}x_{2}:q_{3}x_{3}:q_{4}x_{4}:q_{5}x_{5}]
$$
induces a projective automorphism $\sigma$ of $X$. 
The twisted homogeneous coordinate ring $C^{\sigma}$ is then given by 
$$
C^{\sigma}:=k\langle x_{1},x_{2},x_{3},x_{4},x_{5}\rangle /(\sum_{i=1}^{5}x_{i}^{5}, \ x_{i}x_{j}=q_{ij}x_{j}x_{i})_{i,j},
$$
where $q_{ij}:=q_{i}q_{j}^{-1}$. 
A result of Zhang \cite{Zha} implies an equivalence of categories $\tails(C) \cong \tails(C^{\sigma})$. 
In particular $\tails(C^{\sigma})$ is a Calabi--Yau $3$ category. 
Note that for any $1 \le j \le 5$ we have $\prod_{i=1}^{5}q_{ij}=q_{1}q_{2}q_{3}q_{4}q_{5}$, 
which is compatible with Theorem \ref{CY Main}. 
\end{Ex}

If the graded $k$-algebra $A_n$ is realized as a twisted coordinate ring of a commutative projective scheme $X$, 
then $\tails(A_n)\cong \mathrm{Coh}(X)$ as above and thus $\tails(A_n)$ is not really interesting. 
In Section \ref{nc Hilb} we will show that there exists a non-commutative Calabi--Yau $(n-2)$ scheme that is not realized as a twisted coordinate ring of a Calabi--Yau $(n-2)$-fold. \\

In the rest of this section, we shall prove Theorem \ref{CY Main}, assuming that $\gdim(\tails(A_n))=n-2$, the proof of which will be given in Section \ref{gdim}.  
Henceforce we write $A=A_n$ and $B=B_n$ for notational convenience. 
We begin with a study of the balanced dualizing complex $R_{A}$ of $A$, which plays a role of dualizing sheaf in non-commutative graded algebra \cite{Yek}. 
It behaves better than a dualizing complex and corresponds, in the commutative case, to the local duality. 
Since $A$ has finite global dimension and is finite over its center $Z(A)$, $A$ satisfies the condition $\chi$.   
Then there is a formula \cite{Yek,VdB} for the balanced dualizing complex $R_{A}$ of $A$ as a graded ring\footnote{The exponent ${M^{'}}$ stands for the Matlis dual of a graded ring $M$.};  
$$
R_{A}=R \Gamma_{\mathfrak{m}}(A)^{'} \in \D^b(\tails(A))
$$
where $\Gamma_{\mathfrak{m}}$ denotes local cohomology of $A$ with respect to the augmentation ideal $\mathfrak{m}$.  
Local cohomology does not depend on the ring with respect to which it is taken so we may compute it using a $B$-bimodule resolution of $A$ 
$$
0 \longrightarrow B(-n) \stackrel{\times f}{\longrightarrow} B \longrightarrow A \longrightarrow 0. 
$$
Here we used the fact that $f \in Z(B)$.  
The exact sequence induces the following triangle in $\D^b(\tails(A))$. 
\[\xymatrix{
R\Gamma_{\mathfrak{m}}(B(-n)) \ar[rr]^{\times f} & & R\Gamma_{\mathfrak{m}}(B) \ar[dl]  \\
&   R\Gamma_{\mathfrak{m}}(A) \ar[ul]^{[1]}& 
}\]
This triangle relates $R_{A}$ with $R_{B}$. \\

We start computing the balanced dualizing complex $R_{B}$. 
Let $C$ be a two-sided noetherian Koszul AS regular algebra of global dimension $n$. 
By a result of Smith \cite{Smi}, its Koszul dual $C^{!}$ is a Frobenius algebra i.e. $(C^{!})^{*}\cong C_{\phi^{!}}$ for some automorphism $\phi^{!}$ of $C^{!}$. 
By functionality, $\phi^{!}$ is obtained by dualizing an automorphism $\phi$ of $C$.

\begin{Thm}[Van den Bergh {\cite[Theorem 9.2]{VdB}}] 
Let $C$ be as above and let $\epsilon$ the automorphism of $C$ which is multiplication by $(-1)^{m}$ on the graded piece $C_{m}$. 
Then the balanced dualizing complex of $C$ is given by $C_{\phi \epsilon^{n+1}}[n](-n)$. 
\end{Thm}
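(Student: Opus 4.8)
\emph{Proof plan.} The plan is to compute the balanced dualizing complex $R_C = R\Gamma_{\mathfrak m}(C)'$ directly from the Koszul bimodule resolution of $C$, using the Frobenius structure of $C^!$ to pin down the twisting automorphism. First I would record the shape of the answer. Since $C$ is Koszul and AS regular of global dimension $n$, the minimal resolution of the trivial module $k$ is the length-$n$ Koszul complex, so the graded groups $\Ext^i_C(k,C)$ vanish for $i\neq n$ and are one-dimensional, concentrated in a single internal degree, for $i=n$; that is, $C$ is AS--Gorenstein with Gorenstein parameter $n$. By the standard theory of balanced dualizing complexes over AS--Gorenstein algebras \cite{Yek}, $R_C$ is therefore a shift of an invertible bimodule,
\[
R_C \cong C_\nu[n](-n),
\]
for a unique graded automorphism $\nu$ of $C$, and the whole problem reduces to the identification $\nu = \phi\epsilon^{n+1}$.

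To find $\nu$ I would compute the bimodule structure on the top local cohomology $H^n_{\mathfrak m}(C)$. The Koszul bimodule resolution $K_\bullet \to C$ has terms $K_i = C\otimes_k (C^!_i)^*\otimes_k C$ for $0\le i\le n$, with $(C^!_i)^*$ the space of degree-$i$ Koszul relations and the differential assembled from the canonical element of $C_1\otimes C_1^*$. Replacing $C$ by $K_\bullet$ and applying $R\Gamma_{\mathfrak m}(-)$ along the free right $C$-module structure of each term, the computation of $R\Gamma_{\mathfrak m}(C)$ is controlled by the top term $C\otimes_k (C^!_n)^*\otimes_k C$. This is exactly where the Frobenius property enters: the isomorphism $(C^!)^*\cong C^!_{\phi^!}$ identifies $(C^!_i)^*$ with $C^!_{n-i}$, up to the Frobenius automorphism $\phi^!$, and since $\phi^!$ is the dual of $\phi$, transporting this identification back to $C$ turns the resulting left/right discrepancy into a twist by $\phi$. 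Taking the graded Matlis dual then converts $H^n_{\mathfrak m}(C)$ into the invertible bimodule underlying $R_C$, carrying the $\phi$-twist.

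The hard part will be the sign bookkeeping that produces the remaining factor $\epsilon^{n+1}$, where $\epsilon$ acts by $(-1)^m$ on $C_m$. The Koszul differential, and more importantly the degree reversal $i\mapsto n-i$ needed to match $K_\bullet$ against its dual under the Frobenius pairing, carry Koszul signs depending on internal degree; summed over all $n+1$ terms of the complex and pushed through the Matlis dual, these signs assemble into precisely the automorphism $\epsilon^{n+1}$. Verifying that the composite of the Frobenius identification, the degree reversal, and these accumulated signs gives exactly $\phi\epsilon^{n+1}$, rather than some neighbouring product, is the delicate heart of the argument; once the shape $C_\nu[n](-n)$ and the mechanism for reading off $\nu$ are in place, everything else is formal.
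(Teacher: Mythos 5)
The paper offers no proof of this statement at all: it is quoted as Theorem 9.2 of Van den Bergh's paper \cite{VdB} and used downstream as a black box (in Proposition \ref{R_B} and the proof of Theorem \ref{CY Main}). So there is nothing in the paper to compare your argument against; your proposal has to stand on its own as a proof of Van den Bergh's theorem, and as written it does not.

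Your reduction is fine and is indeed the standard opening move: Koszulity plus AS-regularity of global dimension $n$ forces the Gorenstein parameter to equal $n$, and for a noetherian AS--Gorenstein algebra satisfying $\chi$ the balanced dualizing complex is an invertible bimodule of the shape $C_\nu[n](-n)$, so everything reduces to identifying the graded automorphism $\nu$. But that identification, $\nu=\phi\epsilon^{n+1}$, \emph{is} the theorem, and your proposal explicitly defers it: you assert that the Koszul signs, the degree reversal $i\mapsto n-i$, and the Frobenius identification ``assemble into precisely $\epsilon^{n+1}$'' and then label verifying this as ``the delicate heart of the argument'' without carrying it out. Announcing that the signs work out is not an argument. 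The factor $\epsilon^{n+1}$ (trivial for $n$ odd, equal to $\epsilon$ for $n$ even) is exactly the kind of term a loose sign count cannot pin down, and nothing in your outline excludes neighbouring candidates such as $\phi\epsilon^{n}$ or $\phi^{-1}\epsilon^{n+1}$; ruling these out requires actually tracking the left and right module structures on $H^{n}_{\mathfrak{m}}(C)$ through the Koszul bimodule complex and the Frobenius pairing on $C^{!}$, which is the computation Van den Bergh performs. A secondary gap: applying $R\Gamma_{\mathfrak{m}}$ termwise to the bimodule resolution along the left structure, and then reading off a bimodule answer after Matlis duality, silently uses that local cohomology of $C$ taken on the left and on the right agree as complexes of bimodules (itself a nontrivial theorem of Van den Bergh); this needs to be invoked, not assumed.
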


\begin{Prop} \label{R_B}
Let $B$ be as above. 
The balanced dualizing complex $R\Gamma_{\mathfrak{m}}(B)^{'}$ is $B_{\phi}[n](-n)$ as a graded $B$-bimodule, 
where $\phi$ is the automorphism of $B$ which maps $x_{j} \mapsto \prod_{i=1}^{n}q_{ij}^{-1}x_{j} $ for $1\le j\le n$. 
\end{Prop}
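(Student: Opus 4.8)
The plan is to apply Van den Bergh's theorem to the specific algebra $B = B_n$, which requires computing the Nakayama-type automorphism $\phi$ explicitly from the Frobenius structure on the Koszul dual $B^{!}$. Since $B$ is a Koszul AS-regular algebra of global dimension $n$, the cited theorem tells us that $R\Gamma_{\mathfrak{m}}(B)^{'} \cong B_{\phi \epsilon^{n+1}}[n](-n)$, so the real content is to identify $\phi$ and to check that the twist by $\epsilon^{n+1}$ does not affect the conclusion. The twist $\epsilon^{n+1}$ acts on $B_m$ by $(-1)^{(n+1)m}$; since multiplication by a scalar of the form $(-1)^{cm}$ on each graded piece is an inner-type rescaling that can be absorbed (and in any case is a central grading automorphism), I expect it to either cancel with part of $\phi$ or to be trivial on the relevant bimodule structure, so that only the diagonal automorphism $x_j \mapsto (\prod_{i=1}^n q_{ij}^{-1}) x_j$ survives. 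I would make this absorption precise and state clearly why $\epsilon^{n+1}$ contributes nothing to the final bimodule twist.

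The main computation is to determine $\phi^{!}$ on the Koszul dual $B^{!}$ and then dualize. First I would write down $B^{!}$ explicitly: for the quantum polynomial ring $B = k\langle x_1,\dots,x_n\rangle/(x_ix_j - q_{ij}x_jx_i)$, the Koszul dual is the quantum exterior algebra $B^{!} = k\langle \xi_1,\dots,\xi_n\rangle/(\xi_i^2,\ \xi_i\xi_j + q_{ij}^{-1}\xi_j\xi_i)$ on dual generators $\xi_j$, with top degree $n$ spanned by $\xi_1\xi_2\cdots\xi_n$. This is a finite-dimensional Frobenius algebra, and the Frobenius (Nakayama) automorphism $\phi^{!}$ is read off from how left and right multiplication interact with the one-dimensional socle $k\cdot\xi_1\cdots\xi_n$: for each generator $\xi_j$, moving it past all the others to reconstitute the top form produces exactly the product of commutation scalars $\prod_{i} q_{ij}^{\pm 1}$. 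Carefully tracking these signs and quantum parameters gives $\phi^{!}(\xi_j) = (\prod_{i=1}^n q_{ij})\,\xi_j$ (up to the sign bookkeeping handled by $\epsilon$), and dualizing this automorphism back to $B$ inverts the eigenvalues, yielding $\phi(x_j) = (\prod_{i=1}^n q_{ij}^{-1})\,x_j$ as claimed.

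The hard part will be the sign-and-scalar bookkeeping in the Frobenius form on the quantum exterior algebra: one must order the generators consistently, track the quantum signs $q_{ij}^{-1}$ picked up when transposing $\xi_i$ and $\xi_j$, and verify that after computing $\xi_j \cdot (\text{complementary product}) = (\text{eigenvalue})\cdot \xi_1\cdots\xi_n$ the diagonal eigenvalue is precisely $\prod_{i=1}^n q_{ij}$ and not some rearrangement. I would organize this by fixing the monomial basis of $B^{!}$, computing the pairing $\langle a, b\rangle$ as the coefficient of the top form in $ab$, and then solving $\langle a, b\rangle = \langle b, \phi^{!}(a)\rangle$ on generators. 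Once $\phi^{!}$ is pinned down, the functoriality statement from the excerpt (that $\phi^{!}$ dualizes an automorphism $\phi$ of $B$) gives $\phi$ directly, and combining with Van den Bergh's formula and the absorption of $\epsilon^{n+1}$ completes the proof. A useful sanity check throughout is the symmetric case $q_{ij}=1$, where $\phi$ must reduce to the identity and the formula recovers the commutative polynomial ring's dualizing module $B[n](-n)$.
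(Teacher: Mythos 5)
Your proposal is correct and follows essentially the same route as the paper: invoke Van den Bergh's theorem, identify the Koszul dual $B^{!}$ as the twisted exterior algebra, read off the Frobenius (Nakayama) automorphism $\phi^{!}$ from the pairing against the top form $\xi_1\cdots\xi_n$, and dualize to get $\phi(x_j)=\prod_{i=1}^n q_{ij}^{-1}x_j$. Your conventions differ superficially (you solve $\langle a,b\rangle=\langle b,\phi^{!}(a)\rangle$ and invert eigenvalues under dualization, whereas the paper uses $(a,b)=(\phi^{!}(b),a)$ and a plain transpose, getting $\phi^{!}(y_j)=\prod_i(-q_{ji})y_j$), but these are mutually inverse conventions landing on the same $\phi$, and your explicit attention to absorbing the $\epsilon^{n+1}$ sign twist is if anything more careful than the paper's treatment.
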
 
\begin{proof}
First, $B$ is a Koszul AS regular algebra of global dimension $n$.  
The Koszul dual $B^{!}$ of $B$ is given by the {\it twisted exterior algebra}
$$
B^{!}=\langle y_{1},\dots,y_{n}\rangle /(q_{ij}y_{i}y_{j}+y_{j}y_{i},\ y_{k}^{2})_{i,j,k},
$$
where $y_{1},\dots,y_{n}$ is the dual basis of $x_{1},\dots,x_{n}$.  
$B^{!}$ is a Frobenius algebra and $(B^{!})^{*}\cong B_{\phi^{!}}$, where $\phi^{!}$ is uniquely determined by the property of Frobenius pairing $(a,b)=(\phi(b),a)$ for any $a,b \in B^{!}$. 
We hence obtain $ab=\phi^{!}(b)a$ for any $a\in B_{i}^{!}$ and $b\in B_{n-i}^{!}$. 
It then follows immediately that 
$$
\phi^{!}(y_{j})=\prod_{i=1}^n(-q_{ji})y_{j}.
$$ 
By dualizing $\phi^{!}$, we obtain the desired map $\phi$. 
This completes the proof. 
\end{proof}

\begin{Rem} \label{Remark 1}
Let $C$ be a graded $k$-algebra and $C_{\psi}$ be a graded twisted $k$-algebra of $C$, 
where $\psi$ is the automorphism of $C$ which acts by multiplication of $c^{m}$ on the graded piece $C_{m}$ for some $c\in k$. 
Such a special automorphism is invisible when passing to the quotient category $\tails(C)$. 
In other words tensoring with such a bimodule is the identity functor on $\tails(C)$. 
\end{Rem}
We are now ready to prove Theorem \ref{CY Main}.

\begin{proof}[Proof of Theorem \ref{CY Main}]
By Proposition \ref{R_B} we obtain the following triangle in the derived category $\D^b(\tails(A))$
\[\xymatrix{
R_{A} \ar[rr] & & B_{\phi}[n](-n) \ar[dl]^{\times f}  \\
&   B_{\phi}[n]\ar[ul]^{[1]},& 
}\]
where the automorphism $\phi$ of $B$ is given in Proposition \ref{R_B}. 
Then it immediately follows that 
$$
R_{A}=A_{\phi^{'}}[n-1],
$$
where $\phi^{'}$ is the automorphism of $A$ induced by $\phi$. 
Since $\tails(A)$ has finite global dimension, the Serre functor of $\tails(A)$ is induced by the dualizing complex $R_{A}$ of $A$.  
We note that the functor 
$$
F(*)=*\otimes A_{\phi^{'}}[n-1]
$$
is in general not $(n-1)$-th shift functor in the category $\gr(A)$.  
However, Remark \ref{Remark 1} implies that the Serre functor induced by $R_{A}$ is the $(n-2)$-th shift functor $[n-2]$ on the quotient category $\tails(A)$ 
if and only if $\prod_{i=1}^{n}q_{ij}$ is constant independent of $1 \le j \le n$. 
\end{proof}


\subsection{Proof of $\gdim(\tails(A_n))=n-2$} \label{gdim}
We shall prove that $\tails(A_n)$ has global dimension $n-2$. 
As before $k$ is an algebraic closed field of characteristic zero. 
We begin with some lemmas. 
Let $R$ be a finitely generated commutative ring and $C$ an $R$-algebra which is finitely generated as an $R$-module. 
Assume further that $R\subset Z(C)$. 


\begin{Lem}\label{SmLem2}
The ring $C$ has finite global dimension if the projective dimension of every simple module is bounded by some fixed number $m$. 
The minimum such $m$ is the global dimension of $C$. 
\end{Lem}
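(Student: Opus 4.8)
The plan is to reduce everything to the single identity
$$\gdim(C)=\sup\{\,\pdim_C S : S\ \text{a simple $C$-module}\,\},$$
from which both assertions follow at once: the right-hand side is precisely the least integer bounding all the $\pdim_C S$, and under the hypothesis it is finite. One inequality is free. Since $\gdim(C)$ is by definition the supremum of $\pdim_C M$ over all modules $M$, it dominates $\pdim_C S$ for every simple $S$, so $\gdim(C)\ge \sup_S \pdim_C S$, and hence the least bound $m$ satisfies $m\le \gdim(C)$. The real content is the reverse inequality $\gdim(C)\le m$, which simultaneously yields finiteness of the global dimension.

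To prove $\gdim(C)\le m$ I would argue via vanishing of $\Ext$. Because $C$ is left Noetherian, it is enough to bound the projective dimensions of finitely generated modules, and $\pdim_C M\le m$ for all finitely generated $M$ is equivalent to $\Ext^{m+1}_C(M,N)=0$ for all finitely generated $M,N$. The hypotheses $R\subset Z(C)$ and $C$ module-finite over the Noetherian ring $R$ make each such $\Ext_C^{m+1}(M,N)$ a finitely generated $R$-module whose formation commutes with localization at a maximal ideal $\mathfrak m\subset R$, so it suffices to prove the vanishing of $\Ext^{m+1}_{C_{\mathfrak m}}(M_{\mathfrak m},N_{\mathfrak m})$ for every $\mathfrak m$. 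Passing further to the $\mathfrak m$-adic completion, which is faithfully flat and again commutes with $\Ext$ on finitely generated modules, reduces the problem to the ring $\hat C_{\mathfrak m}:=C\otimes_R\hat R_{\mathfrak m}$, a module-finite algebra over the complete local ring $\hat R_{\mathfrak m}$ and therefore semiperfect.

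Over the semiperfect Noetherian ring $\hat C_{\mathfrak m}$ minimal projective resolutions exist, so its global dimension is detected by the single semisimple module $\hat C_{\mathfrak m}/J$, where $J$ is the Jacobson radical; writing $\hat C_{\mathfrak m}/J$ as a finite direct sum of simple modules gives $\gdim(\hat C_{\mathfrak m})=\max\pdim(\text{those simples})$. Each simple $\hat C_{\mathfrak m}$-module is annihilated by $\mathfrak m$, hence is a simple module over $C/\mathfrak m C$, i.e. a simple $C$-module; and since projective dimension does not increase under the flat base changes $C\to C_{\mathfrak m}\to\hat C_{\mathfrak m}$, each such simple has projective dimension at most $m$. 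Thus $\gdim(\hat C_{\mathfrak m})\le m$, the $\Ext$-group vanishes, and we conclude $\gdim(C)\le m$, giving equality. The main obstacle is exactly this passage from a bound on simple modules to a bound on arbitrary finitely generated modules: over a general Noetherian ring a finitely generated module need not have finite length, so no naive induction on a composition series is available, and it is the localization-and-completion device — together with the verification that $\Ext$, projective dimension, and the set of simples all behave well under these base changes — that supplies the missing semisimple module $\hat C_{\mathfrak m}/J$ controlling the global dimension.
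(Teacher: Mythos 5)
Your proof is correct, but it takes a genuinely different---and substantially more complete---route than the paper's. The paper's entire proof is two sentences: recall the Jordan--H\"older decomposition of a module, and induct along a composition series using the long exact sequence of $\Ext$, so that a module whose composition factors have projective dimension at most $m$ itself has projective dimension at most $m$. That argument directly reaches only modules of \emph{finite length}; since $C$ is module-finite over the finitely generated commutative ring $R$ (a polynomial ring in the paper's application), finitely generated $C$-modules typically have infinite length, and the passage from finite-length modules to arbitrary finitely generated ones---the step you explicitly flag as ``the main obstacle''---is exactly what the paper's sketch leaves unaddressed. Your chain localization $\to$ completion $\to$ semiperfect ring supplies that step: $\Ext^{m+1}_C(M,N)$ is a finitely generated $R$-module whose vanishing can be tested after localizing and completing at each maximal ideal $\mathfrak{m}\subset R$, and over $\hat{C}_{\mathfrak{m}}=C\otimes_R\hat{R}_{\mathfrak{m}}$ minimal projective resolutions reduce the global dimension to the projective dimensions of the finitely many simples, each killed by $\mathfrak{m}$ (because $\mathfrak{m}\hat{C}_{\mathfrak{m}}$ lies in the radical, by Nakayama) and hence pulled back from a simple $C$-module of projective dimension at most $m$. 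The completion is not cosmetic here: $C_{\mathfrak{m}}$ itself need not be semiperfect (idempotents need not lift over a non-complete local base, e.g.\ orders in number fields localized at a split prime), so the minimal-resolution argument cannot be run one step earlier. What the paper's approach buys is brevity, and it becomes a complete proof once one imports from the theory of FBN/Noetherian PI rings the fact that global dimension is detected on simple modules; what yours buys is a self-contained argument in the stated generality. Two facts you invoke without proof---that a module-finite algebra over a complete Noetherian local ring is semiperfect, and that a semiperfect Noetherian ring has global dimension $\pdim(\hat{C}_{\mathfrak{m}}/J)$---are standard, though for the latter note that the minimal-resolution/Nakayama argument via $\Tor_{*}(-,\hat{C}_{\mathfrak{m}}/J)$ bounds \emph{right} global dimension by the projective dimension of $\hat{C}_{\mathfrak{m}}/J$ as a \emph{left} module, so one should also cite the left-right symmetry of global dimension for Noetherian rings.
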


\begin{proof}
We recall the Jordan--Holder decomposition of a module.  
The assertion follows from the long exact sequence induced from a short exact sequence. 
\end{proof}

\begin{Lem}\label{SmLem3}
Assume that $C$ is a PI ring\footnote{The ring $C$ above is a PI ring as it is finite over $R\subset Z(C)$}.  
If $S$ is a simple $C$-module, then its annihilator $Ann(S)$ is some maximal ideal $\mathfrak{m}$ of $R$. 
We then have 
$$
\pdim_{C}(S)=\pdim_{C_{\mathfrak{m}}}(S_{\mathfrak{m}}). 
$$
\end{Lem}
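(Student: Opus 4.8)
The plan is to establish the two assertions in turn: first that $\mathfrak{m}:=\mathrm{Ann}_{R}(S)$ is a maximal ideal of $R$, and then that projective dimension is unchanged under central localization at $\mathfrak{m}$. For the first, observe that since $R\subset Z(C)$, every $r\in R$ acts on $S$ by a $C$-linear endomorphism, so multiplication defines a ring homomorphism $R\to \End_{C}(S)$ whose kernel is exactly $\mathfrak{m}$. By Schur's lemma $D:=\End_{C}(S)$ is a division ring, so $R/\mathfrak{m}$ embeds into $D$ and is therefore a domain; hence $\mathfrak{m}$ is prime. To upgrade this to maximality I would use that $S$, being a cyclic $C$-module and $C$ module-finite over the noetherian ring $R$, is finitely generated over $R$. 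Then $D\subseteq \End_{R}(S)$ is module-finite, hence integral, over $\bar{R}:=R/\mathfrak{m}$. A division ring integral over a domain forces that domain to be a field: for $0\neq r\in\bar{R}$ the inverse $r^{-1}\in D$ satisfies a monic relation $(r^{-1})^{m}+a_{m-1}(r^{-1})^{m-1}+\dots+a_{0}=0$ with $a_{i}\in\bar{R}$, and multiplying by $r^{m-1}$ exhibits $r^{-1}\in\bar{R}$. Thus $\bar{R}$ is a field and $\mathfrak{m}$ is maximal.

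For the equality of projective dimensions I would compare $\Ext$ groups before and after central localization at the multiplicative set $R\setminus\mathfrak{m}$. Since $R$ is noetherian and $C$ is module-finite over $R$, the ring $C$ is noetherian and $S$ admits a resolution by finitely generated projective $C$-modules. Central localization $(-)_{\mathfrak{m}}=(-)\otimes_{R}R_{\mathfrak{m}}$ is flat and exact and commutes with $\Hom_{C}(P,-)$ for finitely generated projective $P$; applying it to such a resolution yields a natural isomorphism
$$
\Ext^{i}_{C}(S,N)_{\mathfrak{m}}\cong \Ext^{i}_{C_{\mathfrak{m}}}(S_{\mathfrak{m}},N_{\mathfrak{m}})
$$
for every finitely generated $C$-module $N$.

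Next I would invoke the standard fact that the $R$-module structure on $\Ext^{i}_{C}(S,N)$ induced (via centrality) through the $S$-variable agrees with the one induced through the $N$-variable; this is immediate on $\Hom$ by right $C$-linearity and propagates to the derived functors. In particular $\mathfrak{m}=\mathrm{Ann}_{R}(S)$ annihilates $\Ext^{i}_{C}(S,N)$, so this $R$-module is supported only at $\mathfrak{m}$ and coincides with its own localization, $\Ext^{i}_{C}(S,N)\cong \Ext^{i}_{C}(S,N)_{\mathfrak{m}}$. Combined with the displayed isomorphism this gives $\Ext^{i}_{C}(S,N)\neq 0$ if and only if $\Ext^{i}_{C_{\mathfrak{m}}}(S_{\mathfrak{m}},N_{\mathfrak{m}})\neq 0$. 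Since the projective dimension of a finitely generated module over a noetherian ring equals the largest $i$ with $\Ext^{i}(S,-)\neq 0$, and since this can be tested on finitely generated modules (because $\Ext^{i}_{C}(S,-)$ commutes with direct limits once $S$ has a finitely generated projective resolution), and since every finitely generated $C_{\mathfrak{m}}$-module has the form $N_{\mathfrak{m}}$, the two projective dimensions agree.

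I expect the main obstacle to be purely a matter of making the noncommutative bookkeeping rigorous: one must check that central localization at $R\setminus\mathfrak{m}$ genuinely behaves like commutative localization here. It is precisely the module-finiteness of $C$ over the central noetherian subring $R$ that makes $R\setminus\mathfrak{m}$ a central Ore set, guarantees exactness and flatness of $(-)_{\mathfrak{m}}$, and lets it commute with $\Hom$ out of finitely generated projectives. Once these finiteness inputs are secured, the Schur-plus-integrality argument for maximality and the support argument for the $\Ext$ comparison are formal.
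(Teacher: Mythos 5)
Your proposal is correct, and for the maximality claim it takes a genuinely different route from the paper. The paper invokes Kaplansky's theorem for affine PI algebras (\cite[Theorem 13.10.3]{MR}) to get that every simple $C$-module is finite dimensional over $k$, then combines Schur's lemma with algebraic closedness of $k$ to conclude $\End_C(S)=k$, so that $f\colon R\to\End_C(S)$ is surjective and its kernel $\mathrm{Ann}_R(S)$ is maximal. You instead run the Artin--Tate/Nullstellensatz-style argument: $S$ is finitely generated over $R$ (cyclic over $C$, which is module-finite over $R$), so $\End_C(S)\subseteq\End_R(S)$ is a finitely generated module over $R/\mathfrak{m}$, hence integral over it, and a domain admitting an integral embedding into a division ring must be a field (the monic-relation trick applied to $r^{-1}$). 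Your version is more elementary and more general---it needs neither the PI hypothesis nor $k$ algebraically closed, only module-finiteness over the central noetherian subring $R$---while the paper's is shorter given the cited machinery and yields the extra information $\End_C(S)=k$. For the equality of projective dimensions the two arguments share the same core (compatibility of central localization with $\Ext$ of finitely generated modules, plus a support argument), but you package it as a single isomorphism $\Ext^i_C(S,N)\cong\Ext^i_{C_{\mathfrak{m}}}(S_{\mathfrak{m}},N_{\mathfrak{m}})$, obtained from the observation that $\mathfrak{m}$ annihilates $\Ext^i_C(S,N)$ through the first variable, whereas the paper proves $\pdim_C(S)\ge\pdim_{C_{\mathfrak{m}}}(S_{\mathfrak{m}})$ from exactness of localization and the reverse inequality by checking that the localizations $\Ext^i_{C_{\mathfrak{n}}}(S_{\mathfrak{n}},N_{\mathfrak{n}})$ vanish at every maximal ideal $\mathfrak{n}$, using $S_{\mathfrak{n}}=0$ for $\mathfrak{n}\neq\mathfrak{m}$. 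Both rest on exactly the finiteness inputs you identify at the end; your packaging of the second half is slightly cleaner, and your first half is the more self-contained proof.
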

\begin{proof}
Since $C$ is a PI affine $k$-algebra, every simple $C$-module is finite dimensional \cite[Theorem 13.10.3]{MR}. 
We now have a map $f:R\rightarrow \End_{C}(S)$ and $\End_{C}(S)$ is both a skew field (by Schur's lemma) and finite dimensional. 
Thus we conclude that $\End_{C}(S)=k$ and the map $f$ is surjective. 
Therefore, the kernel of the map $f$, which is the annihilator of $S$, is a maximal ideal in $R$. 
This proves the first half of the Lemma. \\

Since the localization functor is exact, we have 
$$
\pdim_{C}(S)\ge\pdim_{C_{\mathfrak{m}}}(S_{\mathfrak{m}}).
$$ 
If $M$ and $N$ are finitely generated $C$-modules, then $\Ext^{i}_{C}(M,N)$ is a finitely generated $R$-module.  
Furthermore if $\mathfrak{m}$ is a maximal ideal in $R$, then 
$$
\Ext^{i}_{C}(M,N)_{\mathfrak{m}}=\Ext^i_{C_{\mathfrak{m}}}(M_{\mathfrak{m}},N_{\mathfrak{m}}).
$$ 
Assume that $\Ext^i_{C_{\mathfrak{m}}}(S_{\mathfrak{m}},N_{\mathfrak{m}})$ is zero for all $N$. 
Since $S_\mathfrak{n}=0$ for any maximal ideal $\mathfrak{n}$ in $R$ which is not the annihilator of $S$, 
we also have $\Ext^i_{C_{\mathfrak{n}}}(S_{\mathfrak{n}},N_{\mathfrak{n}})=0$ for such $\mathfrak{n}$ and any $C$-module $N$. 
This means that $\Ext^{i}_{C}(S,N)=0$ and hence $\pdim_{C}(S)\le\pdim_{C_{\mathfrak{m}}}(S_{\mathfrak{m}})$. 
This proves the second half of the Lemma. 
\end{proof}

\begin{Lem}[{\cite[Theorem 7.3.7]{MR}}]\label{SmLem4} 
Let $S$ be a right Noetherian ring and $f$ a regular normal element belonging to the Jacobson radical $J(S)$ of $S$. 
If $\gdim(S/(f))<\infty$ then 
$$
\gdim(S)=\gdim(S/(f))+1.
$$ 
\end{Lem}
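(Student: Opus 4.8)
The plan is to deduce the formula from the two classical change-of-rings theorems for projective dimension, using the normal regular element $f$ to produce the short exact sequence of right $S$-modules
$$
0 \longrightarrow S \stackrel{\cdot f}{\longrightarrow} S \longrightarrow S/(f) \longrightarrow 0,
$$
where $\cdot f$ denotes left multiplication by $f$ (which is right $S$-linear and injective because $f$ is regular, and whose image is the two-sided ideal $(f)=fS=Sf$ by normality). This already shows $\pdim_S(S/(f))=1$, and since $S$ is right Noetherian the quotient $\bar S:=S/(f)$ is again right Noetherian. Write $d:=\gdim(\bar S)<\infty$.

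For the lower bound $\gdim(S)\ge d+1$ I would invoke the first change-of-rings theorem: if $N$ is an $\bar S$-module with $\pdim_{\bar S}(N)=d<\infty$, then, viewing $N$ as an $S$-module annihilated by $f$, one has $\pdim_S(N)=\pdim_{\bar S}(N)+1=d+1$. Because $\bar S$ is right Noetherian and $\gdim(\bar S)=d$ is a finite supremum of integers, it is attained by some cyclic (hence finitely generated) $\bar S$-module $N$, and then $\gdim(S)\ge \pdim_S(N)=d+1$.

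For the upper bound $\gdim(S)\le d+1$ it suffices, since $S$ is right Noetherian, to bound $\pdim_S(M)$ for every finitely generated right $S$-module $M$. Let $K:=\ker(P_0\to M)$ be a first syzygy, with $P_0$ free and $P_0\to M$ surjective; then $\pdim_S(M)\le \pdim_S(K)+1$. The key observation is that $f$ acts regularly on $K$: indeed $\cdot f$ is injective on the free module $P_0$ (as $f$ is right regular on $S$), hence on its submodule $K$, so $\Tor_1^S(K,\bar S)=0$. I can therefore apply the second change-of-rings theorem (Rees's theorem): for a finitely generated $S$-module on which $f$ acts regularly, $\pdim_S(K)=\pdim_{\bar S}(K/Kf)\le \gdim(\bar S)=d$. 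Combining, $\pdim_S(M)\le d+1$, and together with the lower bound this yields $\gdim(S)=d+1$.

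The main obstacle is Rees's theorem, which carries the real content; I would prove the inequality $\pdim_S(K)\le \pdim_{\bar S}(K/Kf)$ by induction on $n:=\pdim_{\bar S}(K/Kf)$. Choosing a free cover $S^{\,r}\to K$ lifting a minimal generating set of $K/Kf$ (here $f\in J(S)$ and finite generation make Nakayama's lemma available), the induced sequence $0\to \bar L\to \bar S^{\,r}\to K/Kf\to 0$ stays exact precisely because $\Tor_1^S(K,\bar S)=0$, and $f$ remains regular on the syzygy $L\subseteq S^{\,r}$. In the base case $n=0$ the reduced sequence exhibits $K/Kf$ as a projective cover with superfluous kernel, forcing $L=Lf$ and hence $L=0$ by Nakayama, so $K$ is free; the inductive step then follows from $\pdim_{\bar S}(\bar L)=n-1$ and the displayed exact sequence. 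The points demanding care are exactly the hypotheses $f\in J(S)$ and the right Noetherian condition, which are what legitimize Nakayama's lemma and the attainment of the global dimension on a finitely generated module.
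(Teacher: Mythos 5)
The paper itself offers no proof of this lemma---it is quoted verbatim from \cite[Theorem 7.3.7]{MR}---so your attempt has to be judged on its own merits against the standard change-of-rings argument, which is indeed the strategy you follow. Your architecture is sound: the lower bound $\gdim(S)\ge d+1$ via the first change-of-rings theorem (the paper's Lemma \ref{SmLem5}, valid for normalizing elements) applied to a cyclic $\bar{S}$-module attaining $d=\gdim(\bar{S})$ (Auslander's theorem guarantees one exists), and the upper bound via a first syzygy $K$, on which $f$ acts regularly, together with the inequality $\pdim_S(K)\le\pdim_{\bar{S}}(K/Kf)$. The inductive step of your proof of that inequality is also fine.

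The genuine gap is your base case $n=0$. You claim that lifting a minimal generating set of $K/Kf$ produces a surjection $\bar{S}^{\,r}\to K/Kf$ that is a projective cover (superfluous kernel), so that projectivity of $K/Kf$ forces $\bar{L}=0$, hence $L=Lf$ and $L=0$ by Nakayama, making $K$ free. Over a ring that is not semiperfect, minimal generating sets do not yield superfluous kernels, and nothing in the hypotheses makes $\bar{S}$ semiperfect. Concretely, the lemma's hypotheses allow $S=k[x]_{(x)}\times k[y]_{(y)}$ with the central regular element $f=(x,y)\in J(S)$, so $\bar{S}=k\times k$; take $K=k[x]_{(x)}\times 0$. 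Then $f$ is regular on $K$, and $K/Kf=k\times 0$ is $\bar{S}$-projective with a minimal generating set of size one, yet the induced kernel is $L=0\times k[y]_{(y)}$: here $\bar{L}=0\times k\neq 0$ is not superfluous, $L\neq Lf$, and $K$ is projective but \emph{not} free. So your base case proves too much, and its two key assertions fail in general. The standard repair: since $K/Kf$ is projective, $0\to L/Lf\to\bar{S}^{\,r}\to K/Kf\to 0$ splits; pick a retraction $\bar{\psi}\colon\bar{S}^{\,r}\to L/Lf$, lift it through $L\to L/Lf$ to $\psi\colon S^{\,r}\to L$ using projectivity of $S^{\,r}$, and note that $\psi|_L\equiv\mathrm{id}_L\pmod{Lf}$. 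Since $Lf\subseteq LJ(S)$ and $L$ is finitely generated, Nakayama makes $\psi|_L$ surjective, hence bijective (a surjective endomorphism of a Noetherian module is injective), and $(\psi|_L)^{-1}\circ\psi$ retracts $L\hookrightarrow S^{\,r}$; thus $K\cong S^{\,r}/L$ is a direct summand of $S^{\,r}$, i.e.\ projective, which is all the induction needs. With this repair (plus the bookkeeping remark that right multiplication by the normal element $f$ is only semilinear, so $Kf\cong K^{\sigma}$ for the automorphism $\sigma$ determined by $fs=\sigma(s)f$---harmless, since twisting by an automorphism preserves projective dimension), your proof becomes correct.
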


\begin{Lem}[{\cite[Theorem 7.3.5]{MR}}] \label{SmLem5}
Let $S$ be a ring and $M$ an $S$-module. 
Take a normalizing non-zero divisor $f\in Ann(M)$ and assume that $\pdim_{S/(f)}(M)$ is finite.  
We then have 
$$
\pdim_{S/(f)}(M)+1=\pdim_{S}(M). 
$$
\end{Lem}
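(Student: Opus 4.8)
The plan is to set $\bar S := S/(f)$ and to compare projective resolutions over $S$ and over $\bar S$, exploiting the fact that $\bar S$ itself has projective dimension one over $S$. Since $f$ is a normalizing non-zero-divisor, left multiplication $\ell_f\colon S\to S$, $r\mapsto fr$, is right $S$-linear with image the two-sided ideal $(f)=fS=Sf$, and it is injective because $f$ is regular. Hence
$$0 \longrightarrow S \xrightarrow{\ \ell_f\ } S \longrightarrow \bar S \longrightarrow 0$$
is a projective resolution of $\bar S$ as a right $S$-module, so $\pdim_S \bar S\le 1$; it equals $1$ unless $\bar S=0$, since $\bar S$ is $f$-torsion while projective $S$-modules are not. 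Writing $d:=\pdim_{\bar S}(M)$, which is finite by hypothesis, I would establish the two inequalities $\pdim_S(M)\le d+1$ and $\pdim_S(M)\ge d+1$ separately.

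For the upper bound I would induct on $d$. When $d=0$ the module $M$ is a direct summand of a free $\bar S$-module, whose projective dimension over $S$ equals $\pdim_S \bar S\le 1$, so $\pdim_S(M)\le 1$. For the inductive step choose a short exact sequence $0\to K\to F\to M\to 0$ of $\bar S$-modules with $F$ free, so that $\pdim_{\bar S}(K)=d-1$; the induction hypothesis gives $\pdim_S(K)\le d$, and since $\pdim_S(F)\le 1\le d+1$, the long exact $\Ext_S$-sequence forces $\pdim_S(M)\le d+1$.

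The lower bound is the main point, and I would extract it from the Cartan--Eilenberg change-of-rings spectral sequence
$$E_2^{p,q} = \Ext^{p}_{\bar S}\big(M,\ \Ext^{q}_S(\bar S, N)\big) \Longrightarrow \Ext^{p+q}_S(M, N),$$
valid for any $S$-module $N$ because $M$ is an $\bar S$-module. Since $\pdim_S \bar S\le 1$, the $E_2$-page is concentrated in the two columns $q=0,1$, with $\Ext^{1}_S(\bar S,N)\cong N/Nf$ computed from the resolution above, so the spectral sequence collapses to a long exact sequence. Evaluating near $p=d$ and using $\Ext^{d+1}_{\bar S}(M,-)=\Ext^{d+2}_{\bar S}(M,-)=0$ (the definition of $d$), the $d_2$-differentials into and out of the surviving spot vanish, yielding an isomorphism $\Ext^{d+1}_S(M,N)\cong \Ext^{d}_{\bar S}(M,\ N/Nf)$. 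It then remains to choose $N$ making the right-hand side nonzero: pick an $\bar S$-module $V$ with $\Ext^{d}_{\bar S}(M,V)\ne 0$ (which exists as $\pdim_{\bar S}M=d$) and let $N=V$ regarded as an $S$-module through $S\to\bar S$; then $f$ acts as zero, $N/Nf=V$, and $\Ext^{d+1}_S(M,V)\ne 0$, giving $\pdim_S(M)\ge d+1$.

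I expect the bookkeeping with the spectral sequence — verifying the collapse to a long exact sequence and tracking the edge maps to obtain the clean isomorphism $\Ext^{d+1}_S(M,N)\cong \Ext^{d}_{\bar S}(M,N/Nf)$ — to be the delicate step; the normalizing (rather than central) hypothesis on $f$ enters only in guaranteeing the length-one resolution of $\bar S$, after which the homological algebra is formal. Combining the two inequalities yields $\pdim_{S/(f)}(M)+1=\pdim_S(M)$, as claimed.
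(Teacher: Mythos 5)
Your proposal is necessarily a different route from the paper, because the paper gives no proof of this lemma at all: it is quoted directly from McConnell--Robson \cite[Theorem 7.3.5]{MR}, where it is established by the classical Rees-type induction on projective dimension. Your plan is sound and mostly correct: the upper bound $\pdim_{S}(M)\le d+1$ by induction on $d=\pdim_{S/(f)}(M)$ is the standard argument and is fine as written, and for the lower bound the change-of-rings spectral sequence $E_2^{p,q}=\Ext^{p}_{\bar{S}}(M,\Ext^{q}_{S}(\bar{S},N))\Rightarrow\Ext^{p+q}_{S}(M,N)$ (with $\bar{S}=S/(f)$) does exist, since coinduction $\Hom_{S}(\bar{S},-)$ is right adjoint to the exact restriction functor and hence preserves injectives; it is concentrated in the rows $q=0,1$ because $\pdim_{S}\bar{S}\le 1$, and your degeneration argument correctly yields $\Ext^{d+1}_{S}(M,N)\cong\Ext^{d}_{\bar{S}}(M,\Ext^{1}_{S}(\bar{S},N))$, as both $E_2^{d+1,0}$ and the target $E_2^{d+2,0}$ of the relevant $d_2$ vanish when $\pdim_{\bar{S}}M=d$. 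This is a clean alternative to the textbook induction.

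There is, however, one step that fails as literally written, and it is exactly where your closing remark goes wrong: the normalizing (non-central) hypothesis does \emph{not} enter only through the length-one resolution. Let $\sigma$ be the automorphism of $S$ determined by $fs=\sigma(s)f$; note $\sigma(f)=f$, so $\sigma$ descends to $\bar{S}$. Lifting left multiplication by $a$ through the resolution $0\to S\to S\to\bar{S}\to 0$ forces multiplication by $\sigma^{-1}(a)$ in homological degree one, so the right $\bar{S}$-module structure on $\Ext^{1}_{S}(\bar{S},N)$ is $N/Nf$ with the \emph{twisted} action $n\cdot a=n\sigma^{-1}(a)$, not the evident one. Hence with your choice $N=V$ you obtain $\Ext^{d+1}_{S}(M,V)\cong\Ext^{d}_{\bar{S}}(M,V_{\sigma^{-1}})$, where $V_{\sigma^{-1}}$ denotes this twist, and that is not the group you arranged to be nonzero. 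The repair is one line: take $N$ to be $V$ equipped with the action $v\cdot s=v\sigma(s)$ (still an $\bar{S}$-module since $\sigma(f)=f$); then $\Ext^{1}_{S}(\bar{S},N)\cong V$ as $\bar{S}$-modules, so $\Ext^{d+1}_{S}(M,N)\cong\Ext^{d}_{\bar{S}}(M,V)\ne 0$ and $\pdim_{S}(M)\ge d+1$ follows. This point is not vacuous for the paper's application, where the lemma is used with $f=X_{n-1}$, a normalizing element that is genuinely non-central.
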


Let us begin with the proof of $\gdim(\tails(A_n))=n-2$. 
Recall that our non-commutative ring $A_n$ is of the form 
$$
A_n:=k\langle x_{1},\dots,x_{n}\rangle /(\sum_{k=1}^{n}x_{k}^{n},\  x_{i}x_{j}=q_{ij}x_{j}x_{i})_{i,j}. 
$$
We write $t_{i}=x_{i}^n$ and 
$$
D:=k\langle t_{1},\dots,t_{n}\rangle /(\sum_{k=1}^{n}t_{k}).
$$ 
Then $\proj(A_n)$ may be seen as the category of modules over the sheaf of algebras $\mathcal{B}$ associated to $A_n$ on the commutative scheme $\proj(D)$. 
The sheaf $\mathcal{B}$ is obtained by gluing five affine patches given by inverting new variables $t_{1},\dots, t_{n}$ respectively. \\

Let us invert for instance $t_{n}$. 
Put $T_{i}=t_{i}/t_{n}$ and $X_{i}=x_{i}/x_{n}$ (right denominators). 
The affine patch under consideration is given by
$$
C:=k\langle X_{1},\dots,X_{n-1}\rangle /(\sum_{k=1}^{n}X_{k}^{n}+1,\  X_{i}X_{j}=Q_{ij}X_{j}X_{i})_{i,j}, 
$$
where $Q_{ij}:=q_{ij}/(q_{ni}q_{nj})$. 
We then must show that $\gdim(C)=n-2$. 
Note that $C$ is a free $R$-module with 
$$
R:=k[ T_{1},\dots,T_{n-1}] /(\sum_{k=1}^{n}T_{k}+1), 
$$
which is isomorphic to a polynomial ring in three variables. \\

Let $\mathfrak{m}=(T_{1}-a_{1},\dots,T_{n-1}-a_{n-1})$ with $\sum_{i=1}^{n-1}a_{i}+1=0$ be a maximal ideal of $R$. 
By Lemmas 
\ref{SmLem2}, 
it is sufficient to show that $\gdim(C_{\mathfrak{m}})=n-2$. \\

We first consider the case when all $a_{i}$'s are different from zero. 
Then we see that 
$$
C/\mathfrak{m}=k\langle X_{1},\dots,X_{n-1}\rangle /(X_{i}X_{j}=Q_{ij}X_{j}X_{i}, \ X_{k}^{n}-a_{k})_{i,j,k}.
$$
is a twisted group algebra and hence semi-simple. 
This means that we have $\gdim(C/\mathfrak{m})=0$. \\

The generators $T_{i}-a_{i}$ of $\mathfrak{m}$ in $R$ form a regular sequence in $C_{\mathfrak{m}}$. 
By the Lemma \ref{SmLem4} we conclude that $\gdim(C_{\mathfrak{m}})=n-2$ 
because $C_{\mathfrak{m}}/\mathfrak{m}\cong C/\mathfrak{m}$ has global dimension zero.  \\

We may therefore assume that for instance $a_{n-1}=0$. 
Let $S$ be a simple module annihilated by $\mathfrak{m}$. 
Since $T_{n-1}=X_{n-1}^{n}$ and $X_{n-1}$ is a normalizing element, $x_{n-1}S$ is a submodule of $S$. 
We thus conclude that the simple module $S$ is actually annihilated by $X_{n-1}$. 
Therefore $S$ may be seen as a $C/(X_{n-1})$-module, where 
$$
C/(x_{n-1})=k\langle X_{1},\dots,X_{n-2}\rangle /(X_{i}X_{j}=Q_{ij}X_{j}X_{i}, \ X_{1}^{n}+\dots+X_{n-2}^{n}+1)_{i,j,k}.  
$$
According to Lemma \ref{SmLem5}, our problem reduces to showing 
$$
\pdim_{C/(x_{n-1})}(S)=n-3.
$$ 
The ring $C/(X_{n-1})$ is of the same kind of $C$ and we can repeat the above argument; 
ultimately it is enough to show that the ring 
$$
C/(X_{2},\dots,X_{n-1})=k\langle X_{1}\rangle/(X_{1}^n+1)
$$
has global dimension zero, which is clearly true.  
This completes the proof.


\section{Hilbert Schemes of Points}  \label{nc Hilb}
In this section, we study the abstract Hilbert schemes of points on non-commutative projective schemes \cite{AZ2}.  
A way to assign geometric objects to a non-commutative scheme is to consider the moduli problem. 
\begin{Def}
A graded right $A$-module $M$ is called an $m$-point module if 
\begin{enumerate}
\item $M$ is generated in degree $0$ with Hilbert series $h_{M}(t)=\frac{m}{1-t}$. 
\item There exists a surjection $A \rightarrow M$ of $A$-modules. 
\end{enumerate}
The isomorphism classes $\mathrm{Hilb}^m(A)$ of $m$-point modules on $A$ is 
called the abstract Hilbert scheme\footnote{We simply write $\mathrm{Hilb}^m(A)$ rather than $\mathrm{Hilb}^m(\proj(A))$.}. 
\end{Def}

\begin{Ex}
Let $F_n:=k\langle x_{1},\dots,x_{n}\rangle $ be the free associative algebra in $n$ variables. 
The abstract Hilbert scheme $\mathrm{Hilb}^1(F_n)$ is the set of $\N$-indexed sequences of points in the projective space $\mathbb{P}^{n-1}$. 
This can be seen as follows. 
First fix a graded $k$-vector space $M$ of Hilbert series $\frac{1}{1-t}$, 
$$
M=\oplus_{i=0}^\infty k m_i
$$
where $m_i$ is a basis of the degree $i$ piece $M_i$. 
If $M$ is an $A$-module, we have $m_ix_j=\xi_{i,j}m_{i+1}$ for some $\xi_{i,j}\in k$. 
It is clear that giving $M$ an $A$-module structure is equivalent to giving a sequence $\xi_{i,j} \in k$.  
Since a point module is cyclic, we need $\xi_{i,j}\ne0$ for some $j$ for a fixed $i$. 
Moreover, two point modules determined by sequences $\{\xi_{i,j}\}$ and $\{\xi_{i,j}^{'}\}$ are isomorphic if and only if 
the vectors $(\xi_{i,1},\dots,\xi_{i,n})$ and $(\xi_{i,1}^{'},\dots,\xi_{i,n}^{'})$ are scalar multiples for each $i$.  
This amounts to considering each vector $(\xi_{i,1},\dots,\xi_{i,n})$ as a point in $\mathbb{P}^{n-1}$. 
\end{Ex}

For a finitely presented graded algebra $A=F_n/I$, $\mathrm{Hilb}^1(A)$ corresponds to 
a subset $Z \subset \prod_{i=0}^{\infty}\mathbb{P}^{n-1} \cong \mathrm{Hilb}^1(F_n)$ determined by an infinite set of equivalence relations. 
We can take $Z_k$ to be the projection of $Z$ onto the first $k$ copies of $\mathbb{P}^{n-1}$ and define $\mathrm{Hilb}^1(A)=\varprojlim Z_k$. \\

In the following, we always assume that the quantum parameters $q_{ij}$'s are $n$-th roots of unity with $q_{ii}=q_{ij}q_{ji}=1$. 
The following proposition may be standard for the experts, but we include it here for the sake of completeness. 
\begin{Prop}
For the AS regular algebra 
$$
B_n= \langle x_{1},\dots,x_{n}\rangle/(x_{i}x_{j}=q_{ij}x_{j}x_{i})_{i,j}, 
$$
the abstract Hilbert scheme $\mathrm{Hilb}^1(B_n)$ is isomorphic to either $\PP^{n-1}$ 
or the union of some faces of the fundamental $(n-1)$-simplex $\PP^{n-1}$ containing all $\mathbb{P}^1$'s making up the $1$-faces. 
The most generic case corresponds to the $1$-skelton of $\PP^{n-1}$ consisting of all $\mathbb{P}^1$'s. 
\end{Prop}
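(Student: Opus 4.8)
The plan is to reduce the computation to an analysis of the incidence correspondence that governs consecutive points of a point module, and then to read off which coordinate subspaces (faces) of $\mathbb{P}^{n-1}$ survive. Following the description of $\mathrm{Hilb}^1(F_n)$ recalled above, a point module over $B_n=F_n/I$ is a sequence $(p_0,p_1,\dots)$ of points $p_i=[\xi_{i,1}:\cdots:\xi_{i,n}]\in\mathbb{P}^{n-1}$ for which the defining relations $x_ix_j=q_{ij}x_jx_i$ hold on the associated module. First I would multilinearize these quadratic relations: evaluating $m_i\cdot x_jx_l$ in two ways, directly via $m_ix_j=\xi_{i,j}m_{i+1}$ and after substituting $x_jx_l=q_{jl}x_lx_j$, yields for all $i$ and all $j,l$ the bilinear equations
$$
\xi_{i,j}\,\xi_{i+1,l}=q_{jl}\,\xi_{i,l}\,\xi_{i+1,j}.
$$
These equations cut out an incidence correspondence $\Gamma\subset\mathbb{P}^{n-1}\times\mathbb{P}^{n-1}$, and $\mathrm{Hilb}^1(B_n)$ is identified with the inverse limit over those sequences whose consecutive pairs lie in $\Gamma$.

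The heart of the argument is a support analysis of $\Gamma$. For a point $p=[\alpha]$ write $S_p=\{j:\alpha_j\neq 0\}$. Using $q_{jl}\in k^\times$ together with $q_{jj}=q_{jl}q_{lj}=1$, I would show that if $(p,p')\in\Gamma$ and $|S_p|\ge 2$, then necessarily $S_{p'}=S_p=:S$ and the scalars satisfy the cocycle condition
$$
q_{jl}\,q_{lm}=q_{jm}\qquad\text{for all }j,l,m\in S.
$$
Indeed, the relation forces $\beta_l/\beta_j=q_{jl}\,\alpha_l/\alpha_j$ on $S$, so the ratios $r_l:=\beta_l/\alpha_l$ obey $r_l=q_{jl}r_j$, which is consistent precisely when the displayed condition holds; conversely, when it holds the successor $p'$ is the unique point $[\,q_{j_0l}\alpha_l\,]_l$, independent of the choice of $j_0\in S$, and it again has support exactly $S$. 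Thus a point $p$ with $|S_p|\ge2$ extends to a full point module if and only if the cocycle condition holds on $S_p$, in which case the entire sequence is determined by $p$ and remains inside the face $\mathbb{P}(S):=\{[\alpha]:\alpha_m=0\text{ for }m\notin S\}$.

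Assembling these facts, I would conclude that $\mathrm{Hilb}^1(B_n)$ is isomorphic to the union $\bigcup_S\mathbb{P}(S)$ over those subsets $S\subseteq\{1,\dots,n\}$ on which the cocycle condition holds; the identification with a closed subscheme of $\mathbb{P}^{n-1}$, rather than merely with a set of sequences, comes from the fact that each $p_i$ determines $p_{i+1}$, so the transition maps $Z_{k+1}\to Z_k$ are isomorphisms and the inverse limit collapses onto the first projection $Z_1$. Since the cocycle condition is automatic whenever $|S|\le 2$ (it reduces to $q_{jj}=1$ and $q_{jl}q_{lj}=1$), every vertex and every coordinate $\mathbb{P}^1$ lies in $\mathrm{Hilb}^1(B_n)$; the generic case, where the condition fails on all $S$ with $|S|\ge 3$, gives exactly the $1$-skeleton, while the opposite extreme $q_{jl}\equiv 1$ gives all of $\mathbb{P}^{n-1}$. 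I expect the main obstacle to be the support analysis: one must rule out mixed or collapsing supports (for instance a successor landing on a vertex) and verify that the cocycle condition is exactly the obstruction to extension, after which the description as a union of faces containing the $1$-skeleton follows immediately.
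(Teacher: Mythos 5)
Your proposal is correct, and its core computation is the same as the paper's: both of you bilinearize the relations $x_jx_l=q_{jl}x_lx_j$ into the incidence equations $\xi_{i,j}\xi_{i+1,l}=q_{jl}\xi_{i,l}\xi_{i+1,j}$ and then detect, for each triple of indices, a scalar obstruction to filling in a face. The difference is in how the general case is executed. The paper works out only the three-variable case (quantum $\mathbb{P}^2$): it multiplies the three relations of a triple to get the dichotomy $pqr=1$ versus $pqr\ne 1$, identifies the solution set as the graph of an explicit linear automorphism $\phi$ (of $\mathbb{P}^2$, respectively of the triangle $E$ of coordinate lines), and then asserts that ``a similar argument works for general $n\ge 2$'' by running over all choices of three commutation relations. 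Your support analysis is the uniform version of that assertion: you prove that the successor of a point has the same support $S$, that the ratios $r_l=\beta_l/\alpha_l$ force $q_{jl}=r_l/r_j$ on $S$, and hence that a point extends to a point module precisely when the cocycle condition $q_{jl}q_{lm}=q_{jm}$ holds on $S$ --- which, since $q_{mj}=q_{jm}^{-1}$, is exactly the paper's per-triple condition $q_{jl}q_{lm}q_{mj}=1$. This yields in one pass the union-of-faces description, the automatic inclusion of the $1$-skeleton (the condition is vacuous for $|S|\le 2$ given $q_{jj}=q_{jl}q_{lj}=1$), and the collapse of the inverse limit $\varprojlim Z_k$ onto the first projection. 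In short: same key idea, but your argument actually proves the statement for all $n$ and identifies exactly which faces occur, where the paper proves the three-variable case and sketches the rest; the paper's version, in exchange, exhibits the automorphism $\phi$ explicitly, which is what it reuses in the subsequent propositions on $A_4$ and $A_5$.
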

\begin{proof}
We begin with $n=2$ case. Let 
$$
A=k\langle x,y,z\rangle/(xy-pyx,yz=qzy,zx=rxz) 
$$
be the quantum $\mathbb{P}^2$ with some $p,q,r\ne0$.  
By the above analysis a point module correspond to a sequence of points in $\mathbb{P}^2$ such that 
$$
\xi_{i,1}\xi_{i+1,2}=p\xi_{i,2}\xi_{i+1,1}, \ \ 
\xi_{i,2}\xi_{i+1,3}=q\xi_{i,3}\xi_{i+1,2}, \ \ 
\xi_{i,3}\xi_{i+1,1}=r\xi_{i,1}\xi_{i+1,3} 
$$
for all $i\ge0$. 
Multiplying the RHSs and LHSs above, we get 
$$
\xi_{i,1}\xi_{i,2}\xi_{i,3}\xi_{i+1,1}\xi_{i+1,2}\xi_{i+1,3}=pqr\xi_{i,1}\xi_{i,2}\xi_{i,3}\xi_{i+1,1}\xi_{i+1,2}\xi_{i+1,3}. 
$$
There are two cases, $pqr=1$ or $pqr\ne 1$. \\

\underline{Case $pqr=1$.} 
We easily solve the equation on the first pair of points $[\xi_{0,1}:\xi_{0,2}:\xi_{0,3}], \ [\xi_{1,1}:\xi_{1,2}:\xi_{1,3}]$ 
and obtain a linear automorphism $\phi$ of $\mathbb{P}^2$ sending $[a,b,c]\mapsto [a:pb:pqc]$ 
such that the set of solutions is the graph of $\phi$: $\{(\xi,\phi(\xi)) \}\subset \mathbb{P}^2\times \mathbb{P}^2$. 
Since the other equations are just the index shift of the first set, it follows that the complete set of solutions is given by 
$$
\{(\xi,\phi(\xi),\phi^2(\xi),\dots)\}\subset \prod_{i=0}^{\infty}\mathbb{P}^2. 
$$
This shows that the isomorphism classes of point modules are parametrized by $\mathbb{P}^2$. \\

\underline{Case $pqr\ne1$.} 
Consider the equation on the first pair of points $[\xi_{0,1}:\xi_{0,2}:\xi_{0,3}], \ [\xi_{1,1}:\xi_{1,2}:\xi_{1,3}]$. 
We can check that one of $\xi_{0,1},\xi_{0,2},\xi_{0,3}$ must be zero. 
We set 
$$
E=\{[\xi_{0,1}:\xi_{0,2}:\xi_{0,3}] \in \mathbb{P}^2\ | \ \xi_{0,1}\xi_{0,2}\xi_{0,3}=0\}. 
$$
The solution is again given by $\{(\xi,\phi(\xi)) \ | \ \xi \in E\}\subset \mathbb{P}^2\times \mathbb{P}^2$. 
Observe that the image of $\phi|_E$ is again $E\subset \mathbb{P}^2$. 
The full set of solution is 
$$
\{(\xi,\phi(\xi),\phi^2(\xi),\dots) \ | \ \xi \in E\}\subset \prod_{i=0}^{\infty}\mathbb{P}^2. 
$$
and the isomorphism classes of point modules are parametrized by 3 lines $E\subset \mathbb{P}^2$. \\

A similar argument works for general $n \ge 2$. 
More precisely, for any choice of 3 commutation relations of the form $xy=pyx$, we can repeat the above argument. 
\end{proof}
We call the quantum parameters are {\it generic} if any choice of 3 commutation relations $xy=pyx,yz=qzy,zx=rxz$, the condition $pqr\ne 1$ holds. 
Note that this notion depends on the expression of the generators of relations. 

\begin{Prop}
Let $S=\proj(A_4)$ be a non-commutative Fermat quartic K3 surface, where 
$$
A_4= \langle x_{1},\dots,x_{4}\rangle/(\sum_{k=1}^{4}x_{k}^4,x_{i}x_{j}=q_{ij}x_{j}x_{i})_{i,j}
$$
for some $q_{ij} \in \C$. 
Then $\mathrm{Hilb}^1(A_4)$ is either a quartic K3 surface or $24$ distinct points. 
In particular, the Euler number of $\mathrm{Hilb}^1(A_4)$ is always $24$, independent of the value of the quantum parameters $q_{ij}$'s.  
\end{Prop}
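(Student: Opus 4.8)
The plan is to determine the point modules over $A_4$ by intersecting the already-known point scheme of $B_4$ with the single extra relation $f_4=\sum_{k=1}^{4}x_k^4$. Recall that a $1$-point module corresponds to a sequence $(\xi_0,\xi_1,\dots)$ of points $\xi_\ell=[\xi_{\ell,1}:\cdots:\xi_{\ell,4}]\in\PP^3$ whose consecutive terms obey the commutation constraints $\xi_{\ell,i}\,\xi_{\ell+1,j}=q_{ij}\,\xi_{\ell,j}\,\xi_{\ell+1,i}$ for all $i,j,\ell$, together with the quartic constraint $\sum_{k=1}^{4}\xi_{\ell,k}\xi_{\ell+1,k}\xi_{\ell+2,k}\xi_{\ell+3,k}=0$ for all $\ell$ coming from $f_4$. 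The commutation constraints alone cut out $\mathrm{Hilb}^1(B_4)$, which by the preceding Proposition is either all of $\PP^3$ or the $1$-skeleton of the coordinate simplex; in both cases consecutive points are related by a face-wise diagonal automorphism $\phi$ with $\phi(\xi)_k=c_k\xi_k$ and $c_j/c_i=q_{ij}$, so the whole sequence is $\xi_\ell=\phi^\ell(\xi_0)$ and $\mathrm{Hilb}^1(A_4)$ is the locus of admissible initial points $\xi_0$.

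The crucial step is that the quartic constraint, evaluated along a $\phi$-orbit, becomes independent of $\ell$. Writing $\xi_{\ell,k}=\lambda_\ell c_k^\ell\xi_{0,k}$ and using that each $q_{ij}$ is a $4$-th root of unity, so that $c_k^4=\gamma$ is the same for every $k$, the product $\xi_{\ell,k}\xi_{\ell+1,k}\xi_{\ell+2,k}\xi_{\ell+3,k}$ equals $\gamma^{\ell+1}c_k^2\xi_{0,k}^4$ up to a factor independent of $k$. Hence every instance of the constraint collapses to the single quartic equation $\sum_{k=1}^{4}c_k^2\xi_{0,k}^4=0$, which after rescaling the coordinates to absorb the nonzero scalars $c_k^2$ is the ordinary Fermat quartic. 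Therefore $\mathrm{Hilb}^1(A_4)$ is $\mathrm{Hilb}^1(B_4)$ cut by a smooth quartic hypersurface in the $\xi_0$-copy of $\PP^3$.

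Now I would split into the two cases. If $\mathrm{Hilb}^1(B_4)=\PP^3$, the intersection is a smooth quartic surface, i.e. a K3 surface. If $\mathrm{Hilb}^1(B_4)$ is the $1$-skeleton, I intersect the quartic with each of the six coordinate lines: on the edge carrying coordinates $i,j$ the equation becomes $c_i^2\xi_{0,i}^4+c_j^2\xi_{0,j}^4=0$, which has $4$ solutions, all with $\xi_{0,i},\xi_{0,j}\neq0$ since the quartic takes the nonzero value $c_i^2$ at the vertex $e_i$. Distinct edges meet only in vertices, so the $6\times4=24$ points are pairwise distinct. The main obstacle is to rule out the intermediate possibility that $\mathrm{Hilb}^1(B_4)$ is a union of some $2$-faces, which would contribute plane-quartic curves and spoil the count; here I would invoke the Calabi--Yau normalization from Theorem \ref{CY Main}, namely that $\prod_{i=1}^{4}q_{ij}$ is independent of $j$ (in force because $S$ is a K3 surface). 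A direct check shows that this condition forces the four triangle products $q_{ij}q_{jk}q_{ki}$ to be governed by a single invariant, two of them equal to a common value $P$ and the other two to $P^{-1}$, so that either all four equal $1$ (giving $\mathrm{Hilb}^1(B_4)=\PP^3$) or none does (giving the pure $1$-skeleton); no mixed case survives.

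Finally, the Euler number follows immediately: a quartic K3 surface has Euler number $24$, and a reduced $0$-dimensional scheme of $24$ points has Euler number $24$, so $\mathrm{Hilb}^1(A_4)$ has Euler number $24$ regardless of the quantum parameters.
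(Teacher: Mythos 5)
Your proposal is correct, and its skeleton matches the paper's own proof: determine $\mathrm{Hilb}^1(B_4)$, show that under the Calabi--Yau constraint it is either $\mathbb{P}^3$ or the $1$-skeleton of the coordinate simplex, and intersect with the quartic to get either a quartic K3 surface or $6\times 4=24$ distinct points, whence Euler number $24$ in both cases. The value of your write-up is that it supplies arguments at exactly the two places where the paper asserts rather than proves. First, the paper simply says the equation $\sum_k x_k^4=0$ ``cuts out'' the intersection; you make this precise by observing that along a $\phi$-orbit, since each $q_{ij}$ is a $4$-th root of unity one has $c_k^4=\gamma$ independent of $k$, so every one of the infinitely many constraints $\sum_k \xi_{\ell,k}\xi_{\ell+1,k}\xi_{\ell+2,k}\xi_{\ell+3,k}=0$ collapses to the single diagonal quartic $\sum_k c_k^2\xi_{0,k}^4=0$, which is smooth (all $c_k\neq 0$), hence a K3 in the first case, and meets each edge in $4$ points away from the vertices in the second. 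Second, and more substantially, the paper dismisses the intermediate possibilities (unions of $2$-faces, which would indeed wreck the count) with an unexplained ``case by case'' check, whereas you derive the dichotomy structurally: writing $Q_j=\prod_i q_{ij}$ and $P_{ijk}=q_{ij}q_{jk}q_{ki}$, a short computation using $q_{ij}q_{ji}=1$ and $q_{ij}^4=1$ gives $Q_1/Q_2=P_{123}P_{124}$, $Q_1/Q_3=P_{123}^{-1}P_{134}$ and $Q_2/Q_4=P_{124}P_{234}^{-1}$, so the CY condition $Q_1=Q_2=Q_3=Q_4$ forces $P_{123}=P_{134}=P$ and $P_{124}=P_{234}=P^{-1}$; hence either all four triangle products equal $1$ (giving $\mathbb{P}^3$) or none does (giving the pure $1$-skeleton), with no mixed case. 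I verified these identities; your ``direct check'' is sound. Your version buys a hand-checkable, conceptual explanation of why the CY normalization excludes the $2$-face cases, where the paper's enumeration (in the spirit of its later computer-assisted proposition) offers no insight; the only cosmetic discrepancy is that you normalize the quartic to the Fermat one by a diagonal change of coordinates, while the paper leaves it as a ``not necessarily Fermat'' smooth quartic --- both yield a K3.
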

\begin{proof}
On case by case basis, it can be checked that $\mathrm{Hilb}^1(B_4)$ is isomorphic to either $\PP^3$ or the $1$-skelton of $\PP^3$ 
under the Calabi--Yau constraints on $q_{ij}$'s in Theorem \ref{CY Main}. 
In the former case, the equation $\sum_{k=1}^{4}x_{k}^4=0$ cuts out a (not necessarily Fermat) quartic K3 surface in $\PP^3$. 
In the latter case, the equation $\sum_{k=1}^{4}x_{k}^4=0$ cuts out $4$ distinct points in each line $\PP^1$, 
so $\mathrm{Hilb}^1(A_4)$ consists of $6\times 4$ distinct points. 
\end{proof}

\begin{Prop}
Let $\proj(A_5)$ be a non-commutative projective Calabi--Yau $3$ scheme.  
If the quantum parameters $q_{ij}$'s are generic, then $\prod_{i=1}^{5}q_{ij}=1$ for any $1\le j\le n$, i.e. the element $\prod_{i=1}^5x_i$ is central. 
\end{Prop}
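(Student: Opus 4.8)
The plan is to translate the statement into additive combinatorics over $\Z/5\Z$ and to read both hypotheses off a single antisymmetric matrix. Write $q_{ij}=\zeta^{b_{ij}}$ with $\zeta$ a primitive fifth root of unity and $b_{ij}\in\Z/5\Z$; the constraints $q_{ii}=q_{ij}q_{ji}=1$ say exactly that $B=(b_{ij})$ is antisymmetric with zero diagonal. Under this dictionary $\prod_{i=1}^{5}q_{ij}=\zeta^{s_j}$, where $s_j:=\sum_i b_{ij}$ is the $j$-th column sum, so by Theorem \ref{CY Main} the Calabi--Yau hypothesis is precisely that all column sums equal a common value $c'\in\Z/5\Z$, while the desired conclusion ``$\prod_i x_i$ central'' is precisely $c'=0$ (moving $x_j$ through $\prod_i x_i$ produces the scalar $\prod_i q_{ij}^{-1}=\zeta^{-s_j}$). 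Likewise, for a triple $\{i,j,l\}$ the cyclic product $q_{ij}q_{jl}q_{li}$ equals $\zeta^{\,b_{ij}+b_{jl}+b_{li}}$, so genericity says every triangle sum is nonzero. Everything thus reduces to: an antisymmetric $B$ over $\Z/5\Z$ with equal column sums $c'$ and all triangle sums nonzero must have $c'=0$.

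First I would record the geometric content of this reduction, since it is what makes the two hypotheses tangible. By the previous proposition, genericity forces $\mathrm{Hilb}^1(B_5)$ to be the $1$-skeleton of $\PP^4$, so every $1$-point module of $A_5$ is supported on one of the ten coordinate lines $\PP^1_{ab}$. On such a line the commutation relation makes the defining automorphism act on the ratio by multiplication by $q_{ab}$, and imposing $\sum_k x_k^5=0$ term by term collapses, because $q_{ab}^5=1$, to the single $i$-independent condition $r_0^5=-1$ on the initial ratio; in particular such a point module exists and is annihilated by $\prod_i x_i$, since that product contains a variable $x_c$ with $c\notin\{a,b\}$, which acts as zero. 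Hence $\prod_i x_i$ lies in the annihilator of every point module, and its centrality — equivalently $c'=0$ — is the question of whether this distinguished degree-$5$ normal element is compatible with the Calabi--Yau structure.

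The core of the argument is to force $c'=0$ from the two combinatorial constraints, and here I expect the main obstacle. The naive attempt is to sum the column-sum identity over well-chosen index sets: for a triangle $\{j,k,l\}$ one gets $s_j+s_k+s_l=3c'$ equal to the sum of the six edges joining $\{j,k,l\}$ to its complement, while for the complementary pair $\{m,n\}$ one gets $s_m+s_n=2c'=-3c'$, and so on. Every such relation, however, is a linear consequence of column-sum equality and holds identically for all $c'$ (the last one is just $5c'=0$), so no linear bookkeeping against the triangle conditions can pin down $c'$. One must instead exploit the nonvanishing of the triangle sums as a genuinely nonlinear, open condition, playing several triangles against one another: concretely I would fix a vertex, say $1$, and analyse the four edge-values $b_{1i}$ subject to $\sum_i b_{1i}=-c'$ together with the requirement that no three of the commutation relations have trivial product. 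This is exactly where genericity must be used in full strength — across all triples of relations, not merely those forming a geometric triangle — and identifying the precise form of genericity needed is the delicate point on which the proof turns, since equal column sums with $c'\neq0$ are not by themselves excluded by the geometric-triangle conditions alone.
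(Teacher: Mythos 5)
Your dictionary in the first paragraph is correct: writing $q_{ij}=\zeta^{b_{ij}}$, Theorem \ref{CY Main} turns the Calabi--Yau hypothesis into ``all column sums $s_j$ of the antisymmetric matrix $B=(b_{ij})$ equal a common value $c'$'', centrality of $\prod_i x_i$ into $c'=0$, and the paper's definition of genericity into ``all ten triangle sums $b_{ij}+b_{jk}+b_{ki}$ are nonzero''. The problem is that the proposal stops there: your third paragraph describes where a proof would have to be found, but gives no argument, so the heart of the proposition is missing. Worse, the difficulty you flag at the end is not merely delicate --- it is fatal to the route as you set it up. The substitution $b_{ij}\mapsto b_{ij}+a_i-a_j$ with $a_i\in\Z/5\Z$ (combinatorially a coboundary; algebraically the effect on the $q_{ij}$ of a Zhang twist by the diagonal automorphism $x_i\mapsto\zeta^{a_i}x_i$, cf.\ Example \ref{Ex}) leaves every triangle sum unchanged and shifts every column sum by the same amount $\sum_i a_i$, since $5a_j=0$. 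Hence ``equal column sums $+$ all triangle sums nonzero'' can never force $c'=0$. Concretely: the circulant matrix $b_{ij}=f(j-i)$ with $f(1)=f(3)=1$, $f(2)=f(4)=4$ has all column sums $0$ and all ten triangle sums equal to $3$ or $4$; adding the coboundary of $a=(1,0,0,0,0)$ yields a matrix with the same (nonzero) triangle sums but all column sums equal to $1$. So under the literal, triangle-only reading of ``generic'', the combinatorial statement you reduced to is false, and any correct proof must first fix, and then genuinely use, a stronger genericity condition of the kind you only gesture at (involving triples of relations that do not form a triangle). Your proposal does neither.

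For comparison, the paper does not attempt a structural argument at all: its proof is a finite computer enumeration of the admissible parameter choices (it reports precisely $3000$ of them). Your reduction to antisymmetric matrices over $\Z/5\Z$ is exactly what makes such an enumeration feasible --- the Calabi--Yau locus is an affine subspace of $(\Z/5\Z)^{10}$ of dimension $7$ --- so the honest completion of your approach would have been to pin down the genericity condition precisely and run that finite check, which is in substance the paper's proof. As written, your text is a correct translation plus an unproved, and under your stated reading false, combinatorial claim.
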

\begin{proof}
This is shown by the aid of computer (there are precisely $3000$ parameters choices).   
\end{proof}

\begin{Cor} \label{NCCY3 Def}
For a generic choice of the quantum parameters, 
$\proj(A_5)$ admits a deformation in the direction of $\prod_{i=1}^{5}x_i$ preserving the Calabi--Yau condition. 
More precisely, the following $A_5^\phi$ gives a non-commutative projective Calabi--Yau $3$ scheme. 
$$
A_5^\phi:=k\langle x_{1},\dots,x_{5}\rangle /\big(\sum_{k=1}^{5}x_{k}^{5}+\phi \prod_{l=1}^5x_l,\  x_{i}x_{j}=q_{ij}x_{j}x_{i}\big)_{i,j}
$$
with any $\phi \in k$. 
\end{Cor}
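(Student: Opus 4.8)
The plan is to show that $A_5^\phi=B_5/(g_\phi)$, with $g_\phi:=\sum_{k=1}^5 x_k^5+\phi\prod_{l=1}^5 x_l$, satisfies exactly the hypotheses that drove the proof of Theorem \ref{CY Main}, so that the dualizing-complex computation carries over, and then to re-establish $\gdim(\tails(A_5^\phi))=3$ by adapting Section \ref{gdim}. The first input is that $g_\phi$ is a central, regular, homogeneous element of degree $5=\gdim(B_5)$. Centrality of $\sum x_k^5$ was already used in Theorem \ref{CY Main}; centrality of $\prod_l x_l$ is precisely the genericity conclusion of the previous proposition, namely $\prod_{i=1}^5 q_{ij}=1$ for every $j$. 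Since $B_5$ is a domain and $g_\phi\neq 0$, multiplication by $g_\phi$ is injective, so we again obtain a bimodule resolution $0\to B_5(-5)\xrightarrow{\times g_\phi}B_5\to A_5^\phi\to 0$.

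Granting $\gdim(\tails(A_5^\phi))=3$, the Calabi--Yau pairing follows as before. First I would feed this resolution into the local-cohomology triangle and apply Proposition \ref{R_B} (which only sees $B_5$, not the choice of normalizing element) to obtain $R_{A_5^\phi}=(A_5^\phi)_{\phi'}[3]$, where $\phi'$ is induced by the automorphism $x_j\mapsto\prod_i q_{ij}^{-1}x_j$ of Proposition \ref{R_B}. Under the genericity hypothesis $\prod_i q_{ij}=1$, this automorphism is the identity, so by Remark \ref{Remark 1} the Serre functor of $\tails(A_5^\phi)$ is $[3]=[n-2]$, which is exactly the functorial perfect pairing defining the Calabi--Yau $3$ condition.

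The crux is therefore $\gdim(\tails(A_5^\phi))=3$, which I would attack through the affine patches of Section \ref{gdim}. Inverting $x_5$ and dehomogenizing, the relevant algebra becomes $C^\phi=k\langle X_1,\dots,X_4\rangle/(X_iX_j-Q_{ij}X_jX_i,\ X_1^5+\dots+X_4^5+1+\phi X_1X_2X_3X_4)$, a central quotient of quantum affine $4$-space $\tilde C$ by $g_\phi$. The decisive feature is that the deformation term $\phi\prod X_l$ contains every variable, so setting any $X_i=0$ annihilates it; hence every ``peel off a normalizing variable'' reduction of Section \ref{gdim} (the case where some coordinate $a_i$ vanishes) lands in the undeformed algebra and is handled verbatim by the earlier argument. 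This leaves only the locus where all $X_i$ are invertible. There, inverting the $X_i$ turns $\tilde C$ into a quantum torus, which at roots of unity is Azumaya over its centre; since $g_\phi$ is central, $C^\phi$ localized on this locus is Azumaya over the affine quintic $R^\phi=Z(\tilde C)/(g_\phi)$, so its local global dimension equals that of $R^\phi$ and is $3$ precisely when $R^\phi$ is regular.

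The main obstacle is thus the regularity of this commutative base. Eliminating the central element $p=\prod t_l$ (with $t_l=x_l^5$) via $g_\phi=0$ identifies $\Proj R^\phi$ with the member $Y_\phi=\{(\sum_{i} t_i)^5=c\,t_1\cdots t_5\}\subset\PP^4$ of a pencil of quintic threefolds, where $c$ is a fixed unit multiple of $\phi^5$. A direct Jacobian computation shows that $Y_\phi$ is smooth along the open torus $\{t_1\cdots t_5\neq 0\}$ for all but finitely many values of $c$, the exceptional value producing a node at $[1:\cdots:1]$. For these $\phi$ the base is regular on the relevant locus, the Azumaya conclusion yields local global dimension $3$, and combined with the boundary reductions above we obtain $\gdim(\tails(A_5^\phi))=3$; upper semicontinuity of smoothness in the flat family over $\mathbb{A}^1_\phi$ then gives the same conclusion on a Zariski-open set of parameters containing $\phi=0$. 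The delicate point, which I expect to demand the most care, is exactly to control (or exclude) the finitely many special parameters at which $Y_\phi$ degenerates, since there the Azumaya structure forces the local global dimension to follow the singular centre.
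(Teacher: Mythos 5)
The first half of your proposal (centrality of $g_\phi$ from the genericity conclusion $\prod_i q_{ij}=1$, the bimodule resolution $0\to B_5(-5)\xrightarrow{\times g_\phi}B_5\to A_5^\phi\to 0$, Proposition \ref{R_B} plus Remark \ref{Remark 1} giving the Serre functor $[3]$) is exactly what the paper's one-sentence proof means by ``almost identical to Theorem \ref{CY Main},'' up to minor bookkeeping ($R_{A_5^\phi}=(A_5^\phi)_{\phi'}[4]$ as a complex of graded bimodules, inducing $[3]$ on $\tails$); and your observation that every boundary reduction of Section \ref{gdim} lands in the \emph{undeformed} algebra, because the deformation term contains all the variables, is correct and useful. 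The genuine gap is in the torus locus: the corollary asserts the Calabi--Yau property for \emph{every} $\phi\in k$, while your argument (Azumaya over the centre, regularity of the base away from finitely many $c$, semicontinuity over $\mathbb{A}^1_\phi$) delivers only a cofinite set of $\phi$ and, as you yourself say, leaves the special parameters open. A proof that excludes finitely many unspecified values of $\phi$ is not a proof of the stated claim.

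There is also a concrete error in the computation on which your torus analysis rests: the centre of the localized patch algebra is strictly larger than $k[T_1^{\pm},\dots,T_4^{\pm},P^{\pm}]/(P^5-\zeta T_1T_2T_3T_4)$. Write $Q_{ij}=\omega^{m_{ij}}$ for a primitive fifth root of unity $\omega$; then $(m_{ij})$ is a $4\times4$ alternating matrix over $\Z/5\Z$ whose kernel contains $(1,1,1,1)$ (that is precisely the centrality of $P$). Since alternating matrices over a field have even rank, the rank is at most $2$ and the kernel is at least $2$-dimensional, so there is at least one more independent central monomial, and the true Azumaya base is a finite cover of your $R^\phi$, not $R^\phi$ itself. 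This does not rescue your argument at the special parameters: the cover is an isogeny of tori in characteristic zero, hence \'etale on the relevant locus, so the singular point of $\{(\sum_i t_i)^5=c\,t_1\cdots t_5\}$ at $t_1=\dots=t_5$ (which occurs at $c=5^5$) pulls back to singular points of the actual centre. In other words the obstruction you flagged appears to be real rather than an artifact, and it equally blocks the paper's own Section \ref{gdim} mechanism, whose key step --- that the fibres over torus points are twisted group algebras, hence semisimple --- visibly fails once the relation $\sum_i T_i+1+\phi P$ mixes the central monomials. So your proposal is incomplete as a proof of the statement for all $\phi$, and the difficulty it isolates is one the paper's ``almost identical'' proof does not address; it is worth raising with the author whether the corollary requires genericity of $\phi$ as well.
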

\begin{proof}
The proof is almost identical to that of Theorem \ref{CY Main}, where the fact that $\sum_{i=1}^nx_i^n$ is central is crucial. 
\end{proof}
An almost identical argument for the K3 surface case applies to the threefold case.  
When $\mathrm{Hilb}^1(B_5)\cong \PP^4$, the abstract Hilbert scheme $\mathrm{Hilb}^1(A_5)$ is isomorphic to a smooth quintic threefold. 
On the other hand, in a generic case, $\mathrm{Hilb}^1(B_5)$ consists of $10$ lines
and the equation $\sum_{k=1}^{5}x_{k}^5=0$ cuts out $5$ distinct points in each line $\PP^1$ to get $50$ points. 
In the latter case, $A_5$ is never realized as the twisted coordinate ring of a variety as $\mathrm{Hilb}^1(A_5)$ is discrete (recall Example \ref{Ex} and \cite{Zha}). 
The above argument readily generalizes to an arbitrary dimension. 

\begin{Prop} \label{ncCY} 
For any $n \in \N$, there exists a non-commutative projective Calabi--Yau $n$ scheme that is not realized as a twisted coordinate ring of a Calabi--Yau $n$-fold.
\end{Prop}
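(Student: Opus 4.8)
The plan is to realize a non-commutative projective Calabi--Yau $n$ scheme as $\proj(A_{n+2})$: by Theorem~\ref{CY Main}, $\tails(A_{n+2})$ is Calabi--Yau of dimension $(n+2)-2=n$ as soon as the quantum parameters satisfy the constraint that $\prod_{i=1}^{n+2}q_{ij}$ be independent of $j$. Writing $N=n+2$ and $q_{ij}=\zeta^{c_{ij}}$ for a fixed primitive $N$-th root of unity $\zeta$ and an antisymmetric matrix $(c_{ij})$ over $\Z/N$, the Calabi--Yau constraint translates into the condition that the column sums $\sum_i c_{ij}$ be independent of $j$, while the genericity notion defined earlier in this section translates into the condition that every triangle sum $c_{ij}+c_{jl}+c_{li}$ be nonzero in $\Z/N$. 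The key is to produce parameters satisfying both at once, and then to run the Hilbert-scheme obstruction exactly as in the quintic ($N=5$) discussion.

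First I would exhibit such generic Calabi--Yau parameters by a circulant ansatz $c_{ij}=g(i-j)$ for a suitable odd function $g\colon \Z/N\to\Z/N$. Oddness of $g$ makes $(c_{ij})$ antisymmetric and, because the circulant structure forces each column sum to equal $\sum_k g(k)$ independently of $j$, the Calabi--Yau constraint is automatic. The triangle sum for $i,j,l$ then collapses to $g(u)+g(v)-g(u+v)$ with $u=i-j$ and $v=j-l$, so genericity amounts to requiring $g(u)+g(v)\ne g(u+v)$ whenever $u,v,u+v$ are all nonzero. For even $N\ge 4$ one may take $g(u)=N/2$ for $u\ne 0$ and $g(0)=0$, and for odd $N\ge 5$ one may take $g$ equal to $1$ on $\{1,\dots,(N-1)/2\}$ and to $-1$ on $\{(N+1)/2,\dots,N-1\}$; in both cases a direct check shows every triangle sum is nonzero. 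This recovers the $24$- and $50$-point pictures for $N=4,5$.

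With such parameters fixed, the computation of $\mathrm{Hilb}^1(B_n)$ shows that $\mathrm{Hilb}^1(B_{n+2})$ is the full $1$-skeleton of $\PP^{n+1}$, a union of $\binom{n+2}{2}$ lines. On each edge only two coordinates survive, so the Fermat relation restricts to $x_i^{n+2}+x_j^{n+2}=0$ and cuts out $n+2$ distinct points lying away from the vertices; hence $\mathrm{Hilb}^1(A_{n+2})$ is a finite set of $(n+2)\binom{n+2}{2}$ points. I would then close with the obstruction: were $A_{n+2}$ a twisted homogeneous coordinate ring of a commutative Calabi--Yau $n$-fold $X$, Zhang's theorem \cite{Zha} would give $\tails(A_{n+2})\cong\mathrm{Coh}(X)$, and, as in Example~\ref{Ex}, the $1$-point modules would be parametrized by the closed points of $X$, forcing $\mathrm{Hilb}^1(A_{n+2})\cong X$ to be $n$-dimensional. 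This contradicts the discreteness just found for every $n\ge 1$.

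The step I expect to be the main obstacle is the existence of generic Calabi--Yau parameters, and the subtlety is arithmetic rather than geometric: over the small ring $\Z/N$ the finitely many triangle conditions carve out a union of proper subsets that can a priori exhaust the entire Calabi--Yau locus, so no soft dimension count is available and an explicit family is genuinely needed; the circulant functions above are engineered to avoid this uniformly in $N$. I would also isolate the degenerate small cases. For $N=3$, that is $n=1$, the Calabi--Yau constraint already forces the unique triangle sum to vanish, so $A_3$ is necessarily a coboundary and hence a twisted coordinate ring; a Calabi--Yau $1$ counterexample therefore has to be produced outside the family $A_n$, and this case should be treated separately.
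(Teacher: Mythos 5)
Your proposal follows the same route as the paper's (admittedly sketchy) proof: take $A_{n+2}$ with quantum parameters that are simultaneously Calabi--Yau in the sense of Theorem \ref{CY Main} and generic, observe that $\mathrm{Hilb}^1(B_{n+2})$ is then the $1$-skeleton of $\PP^{n+1}$, so that $\mathrm{Hilb}^1(A_{n+2})$ is a finite set of $(n+2)\binom{n+2}{2}$ points, and conclude via \cite{Zha} that a twisted homogeneous coordinate ring of an $n$-fold would instead have an $n$-dimensional point scheme. Where you genuinely add something is at the step the paper leaves unproved: the existence of parameters that are both generic and Calabi--Yau. The paper verifies this case by case for $N=4$ and by computer for $N=5$ (the ``$3000$ parameter choices''), and then simply asserts that ``the above argument readily generalizes to an arbitrary dimension.'' Your circulant ansatz $c_{ij}=g(i-j)$ with the two explicit odd functions $g$ supplies a uniform family for every $N\ge 4$, and your verifications are correct: antisymmetry and constancy of the column sums are automatic, and the triangle sums land in $\{N/2\}$ (even case) or $\{\pm 1,\pm 3\}$ (odd case, $N\ge 5$), hence are nonzero mod $N$. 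This recovers the paper's $24$ and $50$ point counts and makes the induction over dimension honest, so for $n\ge 2$ your argument is actually more complete than the paper's.

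The one gap is $n=1$, which you flag rather than resolve. Your computation is right: for $N=3$ the Calabi--Yau constraint forces every triangle sum to vanish mod $3$, so $(q_{ij})$ is a coboundary $q_iq_j^{-1}$, and by the mechanism of Example \ref{Ex} every Calabi--Yau $A_3$ is a twisted coordinate ring of the commutative Fermat cubic; no counterexample exists inside this family. Be aware, however, that this is a defect of the paper's own proof as well: the claim ``for any $n\in\N$'' is supported there only by the assertion that the quintic argument generalizes, which fails for $n=1$ for exactly the reason you identify, and the paper offers no substitute construction. So your proposal establishes the proposition for all $n\ge 2$ --- which is all the paper's own argument establishes --- and your flagged case $n=1$ is a gap you inherited from the statement, not one you introduced. (One minor imprecision, shared with the paper: on a line of the $1$-skeleton the descent condition for a point module of $B_{n+2}$ to $A_{n+2}$ is an equation of the form $\alpha x_i^{n+2}+\beta x_j^{n+2}=0$ with $\alpha,\beta\ne 0$ coming from the twisting along the orbit, rather than literally the Fermat equation; this does not affect the count of $n+2$ distinct points away from the vertices.)
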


\subsection*{Acknowledgement}
The author is grateful to K. Behrend and A. Yekutieli for useful comments on the preliminary version of the present article. 
Special thanks go to M. Van den Bergh for kindly sharing with the author ideas used in Section \ref{gdim}. 
The present work was initiated during the MSRI workshop on Non-commutative Algebraic Geometry in June 2012.  
He thanks D. Rogalski and T. Schedler for helpful discussions at and after the workshop.

\par\noindent{\scshape \small
Department of Mathematics\\
Harvard University\\
1 Oxford Street\\
Cambridge MA 02138 USA}
\par\noindent{\ttfamily kanazawa@cmsa.fas.harvard.edu}

\end{document}